\documentclass[11pt]{amsart}
\usepackage{amsmath}
\usepackage{amsfonts}
\usepackage{amsthm}
\usepackage{amssymb}
\usepackage{verbatim}
\usepackage{enumerate}
\usepackage{graphicx}
\usepackage{longtable}
\usepackage[all,cmtip]{xy}
\usepackage{upref}
\usepackage{hyperref}
\usepackage[hang,small,bf]{caption}

\numberwithin{equation}{section}

\topmargin=-1cm
\oddsidemargin=-0.5cm
\textwidth=16.6cm
\textheight=23cm

\newtheorem{definition}{Definition}[section]
\newtheorem{theorem}[definition]{Theorem}
\newtheorem{proposition}[definition]{Proposition}
\newtheorem{remark}[definition]{Remark}
\newtheorem{lemma}[definition]{Lemma}
\newtheorem{corollary}[definition]{Corollary}

\setlength{\textwidth}{152mm}
\setlength{\evensidemargin}{3mm}
\setlength{\oddsidemargin}{5mm}

\renewcommand{\theta}{\vartheta}

\newcommand{\T}{{\mathbb T}}
\newcommand{\Z}{{\mathbb Z}}
\newcommand{\R}{{\mathbb R}}
\newcommand{\N}{{\mathbb N}}

\newcommand{\op}[1]{\operatorname{#1}}
\renewcommand{\phi}{\varphi}

\begin{document}

\title[The Lusternik-Fet theorem on twisted cotangent bundles]{The Lusternik-Fet theorem \\ for autonomous Tonelli Hamiltonian systems\\ on twisted cotangent bundles}

\author{Luca Asselle}
\address{Ruhr-Universit\"at Bochum, Fakult\"at f\"ur Mathematik, NA 4/35, Universit\"atsstra\ss e 150, D-44780 Bochum, Germany}
\email{\href{mailto:luca.asselle@ruhr-uni-bochum.de}{luca.asselle@ruhr-uni-bochum.de}}
\author{Gabriele Benedetti}
\address{WWU M\"unster, Mathematisches Institut, Einsteinstrasse 62, D-48149 M\"unster, Germany}
\email{\href{mailto:benedett@uni-muenster.de}{benedett@uni-muenster.de}}

\subjclass[2010]{37J45, 58E05}

\keywords{Dynamical systems, Periodic orbits, Symplectic geometry, Magnetic flows}
\date{\today}
\begin{abstract}
Let $M$ be a closed manifold and consider the Hamiltonian flow associated to an autonomous Tonelli Hamiltonian $H:T^*M\rightarrow \R$ and a twisted symplectic form. In this paper we study the existence of contractible periodic orbits for such a flow. Our main result asserts that if $M$ is not aspherical then contractible periodic orbits exist for almost all energies above the maximum critical value of $H$.
\end{abstract}

\maketitle


\section{Introduction}

Let $M$ be a closed manifold and let $\sigma$ be a closed two-form on $M$. We refer to $\sigma$ as the \textit{magnetic form}. We consider the standard symplectic form on $T^*M$ twisted by the pull-back of $\sigma$ via the projection map $\pi:T^*M\rightarrow M$:
\begin{equation*}
\omega_\sigma\ :=\ dp\wedge dq\ +\ \pi^*\sigma\,.
\end{equation*}
Let $H:T^*M\rightarrow\R$ be a Tonelli Hamiltonian; namely $H$ is uniformly convex and grows super-linearly in the fibres (see \cite{Abb13}). We denote by $t\mapsto\Phi_t^{H,\sigma}$ the Hamiltonian flow of $H$ with respect to $\omega_\sigma$. It is generated by the vector field $X_{H,\sigma}$ defined by
\begin{equation*}
\imath_{X_{H,\sigma}}\omega_\sigma\ =\ -\,dH\,.
\end{equation*}
These flows are of special physical interest when the Hamiltonian is of \textit{mechanical} type; namely when
\begin{equation}\label{eq:mec}
H(q,p)\ =\ \frac{1}{2}\, |p|_q^2\ +\ V(q)
\end{equation}
where $|\cdot|$ is the (dual) norm induced by a Riemannian metric $g$ on $M$ and $V:M\rightarrow\R$ is a smooth function. Indeed, these dynamical systems model the motion of a particle of unit mass and charge on a Riemannian manifold $(M,g)$ under the effect of the conservative force $-\nabla V$ and of the stationary magnetic field $\sigma$. Among the mechanical Hamiltonians, we distinguish those which are purely \textit{kinetic} (namely, those such that $V=0$). We denote one such Hamiltonian by $H_{\op{kin}}$. In this case we call $\Phi^{H_{\op{kin}},\sigma}$ the \textit{magnetic flow} of the pair $(g,\sigma)$; its trajectories are then called with slight abuse of terminology \textit{magnetic geodesics}.
\medskip

In this paper we are interested in showing the existence of \textit{contractible} periodic orbits of the flow $\Phi^{H,\sigma}$ on a given energy level $H^{-1}(k)$. Here and in the rest of the paper contractibility has to be understood in $T^*M$ (or, equivalently, in $M$ when we refer to the projection of the orbit on the base manifold) and not in $H^{-1}(k)$.
Our main goal will be to prove a generalization to our setting of the classical theorem of Lusternik-Fet asserting that all closed \textit{not aspherical}\footnote{A manifold is called \textit{aspherical} if all its homotopy groups of degree bigger than $1$ are trivial (see \cite{Lue10} for a survey on this class of manifolds).} Riemannian manifolds admit a non-trivial contractible closed geodesic \cite{LF51} (see also \cite{Bir17,Bir27} for the original idea of Birkhoff when $M$ is a sphere). To this purpose we shall define the energy value
\begin{equation}\label{eq:e0}
e_0(H)\ :=\ \max_{q\in M} \min_{p\in T^*_qM}\ H(q,p)\, .
\end{equation}

We can now state the main theorem of this paper. When $M=S^2$, it has concrete applications to the motion of rigid bodies (see \cite[\S 4]{Nov82} and \cite[Theorem 8]{Koz85}).

\begin{theorem}\label{theoremb}
If $M$ is not aspherical, then $H^{-1}(k)$ carries a contractible periodic orbit, for almost every $k>e_0(H)$. 
\end{theorem}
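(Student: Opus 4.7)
The plan is to prove the theorem by applying a Lusternik-Fet style minimax argument to a free-period Lagrangian action functional on the space of contractible loops in $M$, then invoking Struwe's monotonicity trick to extract bounded Palais-Smale sequences for almost every energy above $e_0(H)$.

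First I would pass to the Lagrangian side: let $L:TM\to\R$ be the Tonelli Lagrangian dual to $H$ under the fibrewise Legendre transform. On the space $\Lambda_0 M\times(0,\infty)$ of contractible free loops with free period $T$, consider
\[
S_k(\gamma,T)\ :=\ T\int_0^1\bigl[L(\gamma,\dot\gamma/T)+k\bigr]\,dt\ +\ \int_u\sigma,
\]
where $u:D^2\to M$ is any filling of $\gamma$. When $\sigma$ is not exact the last integral is only defined modulo the group of spherical periods of $\sigma$, so $S_k$ is genuinely multi-valued; however $dS_k$ is single-valued, and its zeros correspond exactly to contractible periodic orbits of $\Phi^{H,\sigma}$ on $H^{-1}(k)$. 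The hypothesis $k>e_0(H)$ ensures that $L+k>0$ on the zero section, so constant loops sit at strict local minima of $S_k$.

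Next I would build the minimax class from non-asphericity. Let $n\geq 2$ be minimal with $\pi_n(M)\neq 0$ and pick $\alpha\neq 0$ in $\pi_n(M)$. Viewing $S^n$ as the unreduced suspension of $S^{n-1}$ converts $\alpha$ into an $(n-1)$-parameter sphere of based loops that start and end at a constant, that is, a non-trivial relative class $\bar\alpha\in\pi_{n-1}(\Lambda_0 M,M)$ with $M\subset\Lambda_0 M$ embedded as constants (non-triviality follows via the path-loop fibration $\Omega M\to\Lambda_0 M\to M$, under which $\pi_{n-1}(\Lambda_0 M,M)\cong\pi_n(M)$). Define
\[
c(k)\ :=\ \inf_{h\in\bar\alpha}\ \max_{z\in S^{n-1}} S_k\bigl(h(z)\bigr),
\]
with a consistent period prescription along each sweepout; a standard argument gives $c(k)$ finite and strictly above the action of constants.

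The analytic heart of the proof is Struwe's monotonicity trick. Since $\partial_k S_k=T>0$, the function $k\mapsto c(k)$ is non-decreasing, hence differentiable at almost every $k>e_0(H)$. At such a differentiability point one constructs a Palais-Smale sequence $(\gamma_j,T_j)$ at level $c(k)$ whose periods $T_j$ are uniformly bounded above in terms of $c'(k)$. With $T_j$ under control, the Tonelli superlinearity of $L$ yields uniform $W^{1,2}$-estimates on reparameterisations of $\gamma_j$, and a compactness argument produces in the limit a non-constant critical point $(\gamma,T)$ whose underlying orbit is the desired contractible closed orbit on $H^{-1}(k)$. The main obstacle is precisely this period bound: on a twisted cotangent bundle the free-period functional generically admits Palais-Smale sequences with $T_j\to\infty$, and the multi-valuedness of $S_k$ when $\sigma$ is non-exact threatens genuine convergence. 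The monotonicity trick sidesteps this by sacrificing a measure-zero set of energies to secure differentiability of $c$, and hence the essential bound on $T_j$, which is exactly why the conclusion is weakened from "every" to "almost every" $k$.
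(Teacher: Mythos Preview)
Your overall strategy matches the paper's exactly: a Lusternik--Fet minimax built from a non-trivial $\pi_l(M)$, combined with Struwe's monotonicity argument to bound periods for almost every $k$. Two technical points that the paper handles carefully are left vague or missing in your sketch.

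\textbf{Multi-valuedness.} You acknowledge that $S_k$ is only defined modulo the spherical periods of $\sigma$ and then write $c(k)=\inf_h\max_z S_k(h(z))$ ``with a consistent period prescription along each sweepout''. This is the right intuition but needs to be made precise. The paper does not attempt to define a functional on $\Lambda_0$; instead it works with the closed $1$-form $\eta_k=dS^L_k+\pi^*_{H^1}\tau^\sigma$ and, for each map $u:(B^{l-1},S^{l-2})\to(\Lambda_0,M_T)$ in the minimax class, pulls $\eta_k$ back to the \emph{simply connected} parameter ball $B^{l-1}$, where it admits a genuine primitive $S_k(u):B^{l-1}\to\R$ normalised at the north pole. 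The minimax value is then $\inf_u\max_\zeta S_k(u)(\zeta)$. (Incidentally, your domain bookkeeping slips: a class in $\pi_{n-1}(\Lambda_0 M,M)$ is represented by maps of $(D^{n-1},S^{n-2})$, not $S^{n-1}$.)

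\textbf{Periods bounded away from zero.} You argue that Struwe bounds $T_j$ from above, but you do not explain why the Palais--Smale sequence cannot collapse to constants ($T_j\to 0$), which would produce no orbit. This is a genuine issue: constant loops are \emph{not} isolated critical points of the free-period functional (every $(q,T)\in M\times(0,\infty)$ is stationary in the $H^1$-direction), and the negative gradient flow is not complete precisely because flow lines can run into $T=0$. The paper devotes Section~3 to this: for $k>e_0(H)$ it builds a local primitive $S_k$ on the set $\mathcal V_\delta$ of short loops and shows that $S_k\geq\varepsilon>0$ on $\partial\mathcal V_\delta$ while $S_k\to 0$ on constants. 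This mountain-pass geometry is then used (Lemma~4.3) to show that any $\zeta$ with $S_k(u)(\zeta)$ within $\varepsilon/2$ of the maximum must lie \emph{outside} the short-loop region $\mathcal W_k$, hence the vanishing sequence produced by the minimax has $T_j$ bounded away from zero. The same mechanism lets the paper truncate the gradient vector field near constants to make the semi-flow complete without affecting the minimax argument. Your sentence ``constants sit at strict local minima'' gestures at this but does not close the gap.
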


When $\pi_2(M)\neq0$ and the Hamiltonian is of mechanical type, Theorem \ref{theoremb} was proven for the first time by Novikov in \cite[Theorem, page 35]{Nov82} (see also \cite[Theorem 7]{Koz85}). There the author claims the existence for \textit{every} $k>e_0(H)=\max V$, since he did not take into account some decisive compactness problems, which are today only partially solved (cf.\cite{Con06,Mer10,Abb13}). These issues are also ultimately responsible for the fact that we can get existence only on almost every energy level.

Our strategy of proof is to put Novikov's beautiful geometric idea on solid ground by using some techniques from the dynamics of autonomous Lagrangian systems pioneered by Contreras almost ten years ago \cite{Con06}. First, we build a correspondence between the periodic orbits at level $k$ and the zeros of a closed one-form $\eta_k$ on the space of contractible loops in $M$ with arbitrary period. Second, using an argument \`{a} la Birkhoff and Lusternik-Fet, we employ a minimax method in order to construct zeros of $\eta_k$ as limits of sequences of loops $(\gamma_h)$ such that $|\eta_k|_{\gamma_h}\rightarrow 0$.

One of the contributions of this paper is to show that such sequences do have limit points, provided the periods are uniformly bounded and bounded away from zero. Following \cite{Con06} we obtain this bound for \textit{almost every} value of $k$ by an adaptation of the so-called ``Struwe monotonicity argument" \cite{Str90}. It is an open problem to understand if this can be done for \textit{every} $k$.
\medskip

In view of Theorem \ref{theoremb}, it is natural to ask what happens to contractible periodic orbits when the manifold is aspherical. In this case $\pi_2(M)=0$ and, therefore, $\sigma$ is \textit{weakly exact}; namely its lift $\widetilde\sigma$ to the universal cover $\widetilde M$ of $M$ is exact. We define the \textit{Ma\~n\'e critical value of the pair} $(H,\sigma)$ by
\begin{equation}
c(H,\sigma)\ :=\ \inf_{d\widetilde\theta=\widetilde\sigma}\sup_{\widetilde q\in\widetilde M}\ \widetilde H(\widetilde q,-\widetilde\theta_{\widetilde q})\ \in\R\cup\{+\infty\}\,,
\label{cHsigma}
\end{equation}
where $\widetilde H$ is the lift of $H$ to the universal cover. Observe that $c(H,\sigma)$ is finite if and only if $\widetilde \sigma$ has a \textit{bounded primitive}. This means that there exists $\widetilde\theta\in\Omega^1(\widetilde M)$ with
\begin{equation*}
d\widetilde\theta\ =\ \widetilde\sigma\, , \quad \sup_{\widetilde q\in\widetilde M}\ |\widetilde\theta|_{\widetilde q}\ <\ +\infty\, ,
\end{equation*}
where $|\cdot|$ is the (dual) norm induced by the pull-back to $\widetilde M$ of a Riemannian metric on $M$. It is known that if $\sigma$ is non-exact and $\pi_1(M)$ is amenable, then $\widetilde\sigma$ does not admit bounded primitives \cite[Corollary 5.4]{Pat06}, i.e.\ $c(H,\sigma)=+\infty$.

It is immediate to see that $e_0(H)\leq c(H,\sigma)$ and for kinetic Hamiltonians we also have $e_0(H_{\op{kin}})<c(H_{\op{kin}},\sigma)$ if and only if $\sigma\neq0$.

When the magnetic form is weakly exact and the Hamiltonian is of kinetic type, Will Merry proved the following result about the existence of contractible orbits (see \cite{Mer10} and the forthcoming corrigendum \cite{Mer15}). To stick to his notation, we set $c(g,\sigma):=c(H_{\op{kin}},\sigma)$, where $g$ is the Riemannian metric defining $H_{\op{kin}}$.
\begin{theorem}[Merry, 2010]
Let $\sigma$ be weakly exact and $H_{\op{kin}}$ be a kinetic Hamiltonian. Then, for almost every $k\in(0,c(g,\sigma))$, $H^{-1}_{kin}(k)$ carries a contractible closed magnetic geodesic. 
\end{theorem}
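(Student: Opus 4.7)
My plan is to realize contractible closed magnetic geodesics on $H_{\op{kin}}^{-1}(k)$ as non-constant critical points of the free-period Lagrangian action functional, through a mountain-pass argument combined with Struwe's monotonicity trick that compensates for the failure of the Palais-Smale condition. Since $\sigma$ is weakly exact, fix a primitive $\widetilde\theta$ of $\widetilde\sigma$ on the universal cover $\widetilde M$. For any contractible loop $\gamma:[0,T]\to M$, a lift $\widetilde\gamma$ is a closed curve in $\widetilde M$, and the integral $\int_{\widetilde\gamma}\widetilde\theta$ does not depend on the chosen lift (distinct lifts differ by a deck transformation $\phi$, and $\phi^*\widetilde\theta-\widetilde\theta$ is closed, hence exact on $\widetilde M$, with vanishing integral along a closed curve). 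On the Hilbert bundle $\Lambda_c$ of contractible $W^{1,2}$-loops with positive period, set
\begin{equation*}
S_k(\gamma,T)\ =\ \int_0^T\Big[\tfrac{1}{2}|\dot\gamma|_g^2+k\Big]\,dt\ -\ \int_{\widetilde\gamma}\widetilde\theta\,.
\end{equation*}
The critical points of $S_k$ are precisely the contractible closed magnetic geodesics at energy $k$.

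Next I would check the mountain-pass geometry for each $k\in(0,c(g,\sigma))$. On the one hand, an elementary estimate shows $S_k(\gamma,T)\geq\alpha_k>0$ for every loop of sufficiently small $W^{1,2}$-norm and sufficiently small period, where $\alpha_k$ depends only on $k$ and $\sup_{\widetilde M}|\widetilde\theta|$. On the other hand, by the standard Ma\~n\'e-type characterization $c(g,\sigma)=\inf_{\widetilde\theta}\sup_{\widetilde M}\tfrac12|\widetilde\theta|^2$, for $k<c(g,\sigma)$ no primitive can make the integrand of $S_k$ non-negative pointwise, and this forces the existence of a contractible loop $(\gamma_-,T_-)$ with $S_k(\gamma_-,T_-)<0$. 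Consequently, the mountain-pass value
\begin{equation*}
c_k\ :=\ \inf_{u\in\mathcal F}\ \sup_{s\in[0,1]}\ S_k(u(s))\ \geq\ \alpha_k\ >\ 0\, ,
\end{equation*}
where $\mathcal F$ is the family of continuous paths in $\Lambda_c$ joining a small loop near a constant to $(\gamma_-,T_-)$, is well defined and strictly positive.

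The main obstacle is compactness: a Palais-Smale sequence $(\gamma_n,T_n)$ for $S_k$ at level $c_k$ may degenerate either through $T_n\to 0$ (shrinking to a constant) or through $T_n\to\infty$ (with loss of $W^{1,2}$-control of the sequence). To circumvent this I would follow the Struwe monotonicity argument indicated in the Introduction. The function $k\mapsto c_k$ is monotone non-decreasing in $k$, hence differentiable at almost every $k\in(0,c(g,\sigma))$; at each differentiability point the minimax can be refined to extract a Palais-Smale sequence whose periods are a priori bounded above and away from zero, with bounds controlled by $c_k'(k)$. The super-linearity of $H_{\op{kin}}$ in the fibres then yields $W^{1,2}$-precompactness, producing a critical point of $S_k$ of action $c_k>0$. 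Positivity of the action guarantees that the critical point is non-constant, and thus provides the desired contractible closed magnetic geodesic at energy $k$. The restriction to almost every $k$ is the unavoidable outcome of Struwe's trick.
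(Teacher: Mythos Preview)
Your overall architecture---free-period action, mountain-pass geometry, Struwe's monotonicity trick to obtain period bounds at almost every $k$---matches the paper's proof of its Theorem~\ref{theorema} (which specializes to Merry's statement when $H=H_{\op{kin}}$). However, the crucial step where you establish the mountain-pass lower bound has a genuine gap.

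You claim that $S_k(\gamma,T)\geq\alpha_k>0$ on short loops, with $\alpha_k$ depending on $\sup_{\widetilde M}|\widetilde\theta|$. This cannot work in the stated energy range. First, a bounded primitive need not exist: the paper recalls that $c(g,\sigma)<+\infty$ \emph{iff} $\widetilde\sigma$ admits a bounded primitive, so when $c(g,\sigma)=+\infty$ your estimate is vacuous. Second, and more fundamentally, even when a bounded primitive exists the bound $\big|\int_{\widetilde\gamma}\widetilde\theta\big|\leq \|\widetilde\theta\|_\infty\, l(\gamma)$ is only \emph{linear} in the length. Optimizing $\tfrac{1}{2T}e(x)+kT-\|\widetilde\theta\|_\infty\,l(x)$ over $T$ gives a lower bound of the form $(\sqrt{2k}-\|\widetilde\theta\|_\infty)\,l(x)$, which is positive only when $k>\tfrac12\|\widetilde\theta\|_\infty^2\geq c(g,\sigma)$. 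In other words, your estimate yields the mountain-pass barrier precisely \emph{above} the Ma\~n\'e value, the opposite of the range you need.

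The paper (and Merry) fix this by defining the primitive on $\Lambda_0$ via capping discs, $S_k(x,T)=S_k^L(x,T)+\int_{B^2}D_x^*\sigma$, and using the \emph{local} isoperimetric-type bound $\big|\int_{B^2}D_x^*\sigma\big|\leq \Theta_0\,l(x)^2$ for short loops (see \eqref{eq:area} and Lemma~\ref{lem:low}). The quadratic dependence on $l(x)$ is what makes $S_k$ positive on $\partial\mathcal V_\delta$ for \emph{every} $k>e_0(H_{\op{kin}})=0$, independently of whether a bounded primitive exists. Once you replace your lower-bound argument by this local estimate, the rest of your outline (minimax class of paths from a constant loop to a loop of negative action, monotonicity of $k\mapsto c(k)$, Struwe's trick producing vanishing sequences with two-sided period bounds, and the compactness in Theorem~\ref{theorem:ps}) is essentially the paper's Section~\ref{sec:we}. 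One further point you gloss over is the incompleteness of the negative-gradient semi-flow as $T\to0$; the paper handles this by the truncation in Proposition~\ref{lem:com}, which your argument would also need.
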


Using the techniques developed for Theorem \ref{theoremb} we can give an alternative proof of Merry's result for an arbitrary Tonelli Hamiltonian. When $\sigma$ is exact, respectively when $c(H,\sigma)<+\infty$, the theorem below was already shown by Contreras \cite{Con06}, respectively by Osuna \cite{Osu05}.
\begin{theorem}\label{theorema}
If $\sigma$ is weakly exact and $H$ is a Tonelli Hamiltonian, then for almost every $k\in(e_0(H),c(H,\sigma))$ the level $H^{-1}(k)$ carries a contractible periodic orbit.
\end{theorem}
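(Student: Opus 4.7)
The strategy is to adapt the minimax scheme used for Theorem~\ref{theoremb} by replacing the Lusternik-Fet sphere-based minimax with an analytical mountain-pass minimax, produced by the hypothesis $k<c(H,\sigma)$. Since $\sigma$ is weakly exact, I pick a primitive $\widetilde\theta$ of $\widetilde\sigma$ on the universal cover $\widetilde M$. For a contractible loop $\gamma:\R/T\Z\to M$, let $\widetilde\gamma$ be any lift of $\gamma$ to $\widetilde M$, which is automatically closed, and set
$$S_k(\gamma,T)\ :=\ \int_0^T L(\gamma,\dot\gamma)\,dt\ +\ kT\ +\ \int_{\widetilde\gamma}\widetilde\theta.$$
Since $\widetilde M$ is simply connected, the last term is independent of the chosen lift, so $S_k$ is a single-valued $C^{1,1}$ functional on the Hilbert manifold $\Lambda:=\Lambda^{\mathrm{contr}}(M)\times(0,\infty)$, and its differential is precisely the closed one-form $\eta_k$ used for Theorem~\ref{theoremb}. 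Critical points of $S_k$ thus correspond bijectively to contractible periodic orbits of $\Phi^{H,\sigma}$ at energy~$k$.

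The hypothesis $k>e_0(H)$ provides the ``bottom of the mountain'': Legendre duality gives $L(q,0)=-H_{\min}(q)$, hence every constant loop $(q,T)$ satisfies $S_k(q,T)=T\bigl(k-H_{\min}(q)\bigr)\geq T(k-e_0(H))>0$, and a standard neighbourhood argument upgrades this to $S_k\geq\alpha>0$ on a uniform neighbourhood of the stratum of constants. The hypothesis $k<c(H,\sigma)$ provides the ``overhang'': by the classical characterisation of the Ma\~n\'e critical value one has $\inf_\Lambda S_k=-\infty$, so there exists $\gamma_-$ with $S_k(\gamma_-)<0$. The mountain-pass value
$$c_k\ :=\ \inf_{\rho\in\mathcal{P}}\ \max_{s\in[0,1]}\ S_k\bigl(\rho(s)\bigr),$$
where $\mathcal{P}$ is the class of continuous paths in $\Lambda$ joining a short constant loop to~$\gamma_-$, is therefore strictly positive and produces a nontrivial minimax critical level.

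From $c_k$ one extracts a Palais-Smale sequence $(\gamma_h,T_h)$ by the deformation lemma, and the compactness analysis then runs in parallel with the one for Theorem~\ref{theoremb}. The lower bound $c_k\geq\alpha$ together with $S_k=O(T)$ on short loops keeps $T_h$ away from zero; the upper bound on $T_h$ is obtained via Struwe's monotonicity trick applied to $k\mapsto c_k$, which is non-decreasing (since $S_k$ depends linearly on $k$ with slope $T$) and hence differentiable almost everywhere. At each point of differentiability one refines the minimax so that the associated PS sequence has uniformly bounded periods. Once $T_h$ is controlled on both sides, Tonelli convexity converts the action bound into a uniform $W^{1,2}$-bound on $\gamma_h$, and $(\gamma_h,T_h)$ subconverges to the desired periodic orbit.

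The main obstacle I expect is the case $c(H,\sigma)=+\infty$, which is outside the reach of Osuna's theorem. When $c<+\infty$, a bounded primitive $\widetilde\theta$ gives a direct uniform control of the magnetic term $\int_{\widetilde\gamma}\widetilde\theta$ in terms of the length of~$\widetilde\gamma$; when $c=+\infty$ no such global estimate is available, so one has to argue that Struwe-selected PS sequences have lifts $\widetilde\gamma_h$ whose displacement in $\widetilde M$ is controlled by the bounded periods, and then bound $\widetilde\theta$ only on the compact region actually visited. Carrying this estimate through carefully -- so as to obtain a uniform $W^{1,2}$ bound purely from the bound on $T_h$ -- is the step where the techniques pioneered for Theorem~\ref{theoremb} must really work, replacing the shortcut given by the bounded primitive in Osuna's argument.
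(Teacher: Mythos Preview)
Your approach coincides with the paper's: since $\sigma$ is weakly exact, $\eta_k$ has a global primitive $S_k$ on $\Lambda_0$ (the paper defines it via capping discs, $S_k(x,T)=S_k^L(x,T)+\int_{B^2}D_x^*\sigma$, which agrees with your lift-based formula because $\sigma|_{\pi_2(M)}=0$), and one runs a mountain-pass argument over paths from a fixed constant loop to a loop of negative action, combined with Struwe's monotonicity trick for the period bounds, exactly as in Section~\ref{sec:we}.

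The difficulty you anticipate in your last paragraph, however, is not real. You expect to need control on the magnetic term $\int_{\widetilde\gamma_h}\widetilde\theta$ in order to extract a $W^{1,2}$-bound on the Palais--Smale sequence, and you worry that this fails when $c(H,\sigma)=+\infty$. But the paper's compactness criterion (Theorem~\ref{theorem:ps}) makes no use of the action value or of $\widetilde\theta$: the bound $e(x_h)\leq CT_h^2$ comes solely from the $\partial/\partial T$-component of $\eta_k$, namely
\[
(\eta_k)_\gamma\!\left[\frac{\partial}{\partial T}\right]\ =\ k\ -\ \frac{1}{T}\int_0^T E(\gamma,\gamma')\,ds\,,
\]
together with the quadratic lower bound \eqref{disuguaglianzeE} on $E$. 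Once the periods are bounded and bounded away from zero, compactness follows with no reference to the primitive or its boundedness; the case $c(H,\sigma)=+\infty$ needs no additional argument.
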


In general one cannot expect existence of contractible periodic orbits for energies above $c(H,\sigma)$ when $M$ is aspherical. Indeed, if $g$ is a metric on $M$ with non-positive sectional curvature, there are no non-constant contractible closed geodesics on $M$. Hence, when $\sigma=0$, $H^{-1}_{\op{kin}}(k)$ does not carry contractible periodic orbits, for any $k>c(g,0)=0$. However, to the authors' knowledge it is an open question to determine whether a given aspherical manifold has a metric $g$ with no non-constant contractible closed geodesics. Observe, indeed, that there are examples of aspherical manifolds supporting no metric of non-positive sectional curvature \cite{Dav83,Lee95}.

As we have sketched above, the proof of Theorem \ref{theoremb} (and of Theorem \ref{theorema}) is based on variational methods for autonomous Lagrangian systems. In recent years, 
such methods have also been used to prove the existence of infinitely many (not necessarily contractible) periodic orbits on almost all low energy levels when $M$ is a closed surface, $\sigma$ is exact, and $H=H_{\op{kin}}$ (see 
\cite{AMP13,AMMP14}). In a recent paper, the authors extended this latter result to the case in which $\sigma$ is oscillating but not necessarily exact, under the further assumption that the surface $M$ is not $S^2$ (see \cite{AB14,AB15}). Settling the case of $S^2$ represents a challenging open problem.

\medskip

Finally, we analyze the existence of contractible periodic orbits with energy below $e_0(H)$ without assuming any additional condition on $M$ or on $\sigma$. To this purpose we recall that, if $(W,\omega)$ is a symplectic manifold, a set
$U\subset W$ is said to be \textit{displaceable in} $(W,\omega)$ if there exists a compactly supported Hamiltonian diffeomorphism $\phi:W\rightarrow W$, i.e.\ a time-$1$ map of a time-dependent Hamiltonian flow, such that $\phi(U)\cap U=\emptyset$. 
In \cite{Con06} Contreras observed that, when $\sigma=0$, sublevels $\{H\leq k\}$ are displaceable for $k<e_0(H)$. This fact enables him to apply a result of Schlenk \cite[Corollary 3.2]{Sch06} which guarantees the existence of a contractible periodic orbit
for almost every $k\in[\min H,e_0(H))$. Our simple remark leading to the next theorem is that Contreras' observation carries over to the case of arbitrary $\sigma$ and that Schlenk's abstract result still applies.

\begin{theorem}\label{theoremc}
The energy sublevel set $\{H\leq k\}$ is displaceable in $(T^*M, \omega_\sigma)$, for every $k\in[\min H,e_0(H))$. 
\end{theorem}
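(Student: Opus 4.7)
The plan is to adapt Contreras' observation from the untwisted case $\sigma=0$ by verifying that fiber-translating Hamiltonians on $T^*M$ remain unaffected by the twist. The definition \eqref{eq:e0} reads $e_0(H)=\max_q h(q)$ with $h(q):=\min_{p\in T_q^*M}H(q,p)$; Tonelli convexity and super-linearity imply that $h$ is continuous, and the hypothesis $k<e_0(H)$ then yields a nonempty open set $U:=\{h>k\}\subset M$. Since $\pi(\{H\leq k\})=\{h\leq k\}=:V$, the set $V$ is compact and disjoint from $U$. Moreover, the fiberwise sublevels $\{H_q\leq k\}\subset T_q^*M$ are compact convex and depend continuously on $q\in V$, so their diameters (in a fixed dual Riemannian norm) are uniformly bounded by some $R<\infty$.

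The key technical point is that, for any smooth $f:M\to\R$, the Hamiltonian vector field of $F:=f\circ\pi$ with respect to $\omega_\sigma$ coincides with the one with respect to the untwisted form $dp\wedge dq$. Indeed, seeking $X_F$ vertical, one has $\iota_{X_F}\pi^*\sigma=0$ because $\pi^*\sigma$ involves only terms of the form $dq^i\wedge dq^j$; the defining equation then reduces to the untwisted one and gives $X_F=-\partial_jf\,\partial_{p_j}$, whose time-$t$ flow is the fiber translation $(q,p)\mapsto(q,p-t\,df_q)$. This is the only point at which one must check that Contreras' argument carries over to the twisted setting.

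It then remains to construct $f$ so that this translation displaces the sublevel, and to truncate $F$ to obtain compact support. By the homogeneity of a connected manifold, one produces $f_0:M\to\R$ with $\mathrm{Crit}(f_0)\subset U$---take any Morse function on $M$ and push its finitely many critical points into $U$ via an ambient isotopy---so that $\delta:=\inf_V|df_0|>0$ by compactness of $V$. Scaling $f:=cf_0$ with $c>R/\delta$ ensures that for every $(q,p)\in\{H\leq k\}$ the translated point $p-c\,df_q$ lies outside the convex set $\{H_q\leq k\}$, so the time-$1$ flow of $F=cf_0\circ\pi$ displaces $\{H\leq k\}$. To obtain compact support, multiply $F$ by a smooth cutoff $\chi:T^*M\to[0,1]$ equal to $1$ on a compact set containing the trajectories $\{(q,p-tc\,df_q):(q,p)\in\{H\leq k\},\,t\in[0,1]\}$ and vanishing outside a larger compact set; on $\{\chi=1\}$ we have $X_{\chi F}=X_F$, and since the trajectories of interest remain there, the time-$1$ flow of $\chi F$ still displaces $\{H\leq k\}$. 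The main obstacle in this plan is essentially the vertical-vector-field computation showing that the twist $\pi^*\sigma$ drops out; everything else is routine.
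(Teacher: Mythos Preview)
Your proposal is correct and follows essentially the same route as the paper: both isolate the observation that the Hamiltonian vector field of a basic function $f\circ\pi$ is vertical, hence annihilated by $\pi^*\sigma$, so its flow is the $\sigma$-independent fiber translation $(q,p)\mapsto(q,p-t\,df_q)$; both then pick $f$ with no critical points over $\pi(\{H\leq k\})$ and truncate by a cutoff. The only cosmetic difference is that the paper uses the superlinearity of $H$ to show the translated point leaves the sublevel, while you use a diameter bound on the convex fibers $\{H_q\leq k\}$---either argument works.
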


\begin{corollary}\label{corollaryc}
The energy level set $H^{-1}(k)$ carries a contractible periodic orbit for almost every $k\in [\min H,e_0(H))$.
\end{corollary}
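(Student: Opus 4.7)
The plan is to deduce Corollary \ref{corollaryc} from Theorem \ref{theoremc} via Schlenk's displacement criterion, so the core work is to establish the displaceability asserted in the theorem.

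For the theorem, the starting observation---analogous to Contreras' in the untwisted case---is that $k<e_0(H)$ together with \eqref{eq:e0} produces a point $q^*\in M$ with $\min_{p\in T^*_{q^*}M}H(q^*,p)>k$, so the entire fibre $T^*_{q^*}M$ avoids the compact (by Tonelli) sublevel $\{H\le k\}$. Hence $M_k:=\pi(\{H\le k\})$ is a compact subset of the open, connected, non-compact manifold $N:=M\setminus\{q^*\}$. Since any compact subset of a connected, non-compact smooth manifold is displaceable by a compactly supported isotopy in that manifold (a standard differential-topological fact), we obtain a smooth vector field $X$ on $M$, supported in $N$, whose time-$1$ flow $\psi_1:M\to M$ satisfies $\psi_1(M_k)\cap M_k=\emptyset$.

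To promote $\psi_1$ to a Hamiltonian diffeomorphism of $(T^*M,\omega_\sigma)$ I use the fibrewise-linear function $K(q,p):=p(X(q))$. Writing $X_{K,\sigma}=A^i\partial_{q^i}+B_i\partial_{p_i}$ and using $\imath_{X_{K,\sigma}}\omega_\sigma=-dK$, a short coordinate computation shows that $A^i=X^i$ \emph{regardless} of $\sigma$: indeed $\pi^*\sigma$ involves only the $dq^i$'s and therefore contributes only to the equations determining the vertical components $B_i$. Consequently $\pi\circ\Phi^{K,\sigma}_t=\psi_t\circ\pi$ for every $t$. Although $K$ is not compactly supported on $T^*M$, the set $\Phi^{K,\sigma}_{[0,1]}(\{H\le k\})$ is relatively compact, so multiplying $K$ by a cut-off function equal to $1$ on a neighbourhood of this set and vanishing outside a larger compact set yields a compactly supported Hamiltonian whose time-$1$ map $\phi\in\op{Ham}_c(T^*M,\omega_\sigma)$ still agrees with $\Phi^{K,\sigma}_1$ on $\{H\le k\}$. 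Therefore $\pi(\phi(\{H\le k\}))=\psi_1(M_k)$ is disjoint from $M_k=\pi(\{H\le k\})$, which forces $\phi(\{H\le k\})\cap\{H\le k\}=\emptyset$ and proves Theorem \ref{theoremc}.

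For the corollary, I invoke \cite[Corollary 3.2]{Sch06}: on a geometrically bounded symplectic manifold, a displaceable sublevel $\{H\le a\}$ of an autonomous Hamiltonian forces $H^{-1}(k)$ to carry a contractible periodic orbit for almost every $k\in[\min H,a)$. The symplectic manifold $(T^*M,\omega_\sigma)$ is geometrically bounded whenever $M$ is closed---any Riemannian metric on $M$ induces a Sasaki-type metric of bounded geometry and a compatible almost complex structure on $T^*M$, and this remains true after perturbing the symplectic form by the pull-back of a bounded $2$-form $\sigma$. Combining this with Theorem \ref{theoremc} applied to arbitrary $a\in(\min H,e_0(H))$, and letting $a\uparrow e_0(H)$ along a sequence (a countable union of null sets remains null), yields the corollary. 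The only mildly technical point in this outline is the geometric-boundedness verification for Schlenk's criterion; the displacement argument itself is elementary and, crucially, insensitive to the magnetic twist because fibrewise-linear Hamiltonians continue to lift base vector fields in $(T^*M,\omega_\sigma)$.
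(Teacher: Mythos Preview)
Your argument is essentially correct, but it diverges from the paper's main proof of the displacement theorem in an instructive way. The paper displaces $\{H\le k\}$ \emph{vertically}: it takes a function $f:M\to\R$ whose critical points lie outside $\pi(\{H\le k\})$ (this is \cite[Lemma 8.1]{Con06}) and uses the Hamiltonian $f\circ\pi$, whose $\omega_\sigma$-flow is the fibrewise translation $(q,p)\mapsto(q,p-t\,d_qf)$ regardless of $\sigma$; superlinearity of $H$ then forces $H(q,p-T\,d_qf)>k$ for $T$ large. You instead displace \emph{horizontally}: you move the footprint $M_k$ off itself in $M$ by an isotopy and lift it via $K(q,p)=p(X(q))$. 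This is precisely Contreras' original mechanism, which the paper records in its Remark after the proof and confirms still works for arbitrary $\sigma$ (since the $\dot q$-equation is unaffected by the twist). The paper's route is a bit more self-contained because it only needs a function with critical locus avoiding $M_k$, whereas yours relies on the displaceability of compact sets in non-compact connected manifolds; you call this standard, and it is true, but it would be worth giving a reference or a two-line argument in the specific case $N=M\setminus\{q^*\}$.

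There is one genuine omission in your deduction of the corollary. Schlenk's result in \cite{Sch06} requires not only that $(T^*M,\omega_\sigma)$ be geometrically bounded (tame) but also that it be \emph{strongly semi-positive}; the paper checks both hypotheses explicitly. Semi-positivity is immediate here because the cotangent fibres form a Lagrangian foliation, so $c_1(T^*M,\omega_\sigma)=0$, but you should state this rather than fold everything into geometric boundedness. With that addition your proof goes through.
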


The proof of Theorem \ref{theoremb} and \ref{theorema} and the proof of Corollary \ref{corollaryc} are substantially different. As we previously remarked, the former is tailored to the class of Tonelli Lagrangian systems while the latter pertains to the broader world of symplectic geometry. Building on work of Ta{\u\i}manov \cite{Tai83}, Abbondandolo showed how to use the first approach to give a proof of Corollary \ref{corollaryc} when $\sigma$ is exact \cite{Abb13}. It is still not known whether his idea can be adapted for an arbitrary $\sigma$. Similarly, the second approach was used by Schlenk in \cite[Corollary 3.6]{Sch06} to reprove Theorem \ref{theoremb} and \ref{theorema}, when $\sigma\neq0$, $H=H_{\op{kin}}$ and $k$ lies in the range $(0,d_1(g,\sigma))$, where
\begin{equation*}
d_1(g,\sigma):=\sup\Big\{k\in(0,+\infty)\ \big|\ \{H_{\op{kin}}\leq k\} \mbox{ is stably displaceable in } (T^*M,\omega_\sigma)\Big\}\,.
\end{equation*}

The fact that $d_1(g,\sigma)$ is positive follows from results of Laudenbach-Sikorav \cite{LS94} and Polterovich \cite{Pol95}. Moreover, in the weakly exact case Merry shows in \cite[Theorem 1.1]{Mer11} that $d_1(g,\sigma)\leq c(g,\sigma)$. It is conjectured in \cite{CFP10} that $d_1(g,\sigma)=c(g,\sigma)$. Such equality holds for non-exact forms on tori since in this case $c(g,\sigma)=+\infty$ and there exists a symplectomorphism $(T^*\T^n,\omega_\sigma) \rightarrow (\R^2\times W,\omega_{\op{st}}\oplus \omega)$,
where $(W,\omega)$ is some symplectic manifold, as shown in \cite[Theorem 3.1]{GK99}.
\medskip

A common feature of both methods is that they yield existence results only for almost every level in a certain energy range. However, it is known that if, under the hypotheses of Theorem \ref{theoremb} or Theorem \ref{theorema} or Corollary \ref{corollaryc}, some fixed energy level $H^{-1}(k)$ is \textit{stable} in $(T^*M,\omega_\sigma)$ \cite[page 122]{HZ94}, then there exists a closed contractible orbit with energy $k$ (see \cite[Ch. 4, Theorem 5]{HZ94} and \cite[Corollary 8.4]{Abb13}). This happens for instance whenever 
$M$ is not-aspherical, $\sigma$ is weakly exact and $k>c(H,\sigma)$, as follows at once from the analysis contained in \cite[Section 4]{Mer10}.

In connection with such results, one would like to understand which energy levels are stable. For example, when $M$ is a closed orientable surface, $\sigma$ is symplectic and $H=H_{\op{kin}}$, stability holds on every low energy level. Such statement can be found in \cite{Ben14a} for $M\neq\T^2$ (see also \cite[Lemma 12.6]{FS07} and \cite[Remark 2.2]{Pat09}). When $M=\T^2$ the global angular form associated to any nowhere vanishing section of $T^*\T^2$ is a stabilizing form. Hence, in Corollary \ref{corollariosuperfici} we get a new proof of the existence of a contractible closed magnetic geodesic for every low energy, a result which was already proven by Ginzburg in \cite{Gin87} and, when $M\neq\T^2$, by Schneider in \cite{Schn11,Schn12a,Schn12b}. 

In this setting, one can also give better lower bounds for the number of contractible closed magnetic geodesics. If $M\neq S^2$, this number turns to be infinite for \textit{every} sufficiently low energy: for $M=\T^2$ this was established independently in \cite{LeC06} and in \cite{Hin09}; for surfaces of higher genus a proof was recently given in \cite{GGM14}. On the other hand, for $M=S^2$ every low energy level carries either two or infinitely many closed magnetic geodesics \cite{Ben14b,Ben15}.

If $M$ has dimension higher than $2$ and $\sigma$ is a symplectic form, proving that low energy levels are stable is an open problem (see \cite{CFP10} where a proof of stability is given in the homogeneous case). However, existence of contractible magnetic geodesics for every low energy levels still holds as Usher proves in \cite{Ush09} building upon previous work of Ginzburg and G{\"u}rel \cite{GG09} (see also \cite{Ker99} for multiplicity results, generalizing \cite{Gin87}, when $\sigma$ is a K\"ahler form). 
\medskip

We end up the introduction by giving a summary of the content of this paper.
In Section \ref{sec:free} we introduce the free-period action $1$-form $\eta_k$ associated to the pair $(H,\sigma)$ and we investigate the compactness properties of the vanishing sequences of $\eta_k$ as well as the completeness of the associated negative ``gradient flow''. This leads naturally to studying the behaviour of $\eta_k$ on the set of short loops, which is the content of Section \ref{sec:con}. The general analysis developed in Section \ref{sec:free} and the geometric information obtained in Section \ref{sec:con} allow us to prove Theorem \ref{theoremb} in Section \ref{sec:gen} and Theorem \ref{theorema} in Section \ref{sec:we} via minimax methods. Applications to stable energy levels on surfaces are given along the way. Finally, in Section \ref{sec:dis} we turn to symplectic techniques and prove Theorem \ref{theoremc} and Corollary \ref{corollaryc}.

\subsection*{Acknowledgements}
The authors are deeply grateful to Will Merry for making the draft of his corrigendum available to them prior to publication. They also warmly thank Alberto Abbondandolo, Viktor Ginzburg and Felix Schlenk for valuable discussions and the anonymous referee for precious suggestions, which helped to improve the article. Luca Asselle is partially supported by the DFG grant AB 360/2-1
"Periodic orbits of conservative systems below the Ma\~n\'e critical energy value".
Gabriele Benedetti is supported by the DFG grant SFB 878.


\section{The free-period action 1-form}\label{sec:free}

In this section we give a variational characterization of the periodic orbits on a given energy level $H^{-1}(k)$ in a suitable space of parametrized loops with arbitrary period. We allow also for non-contractible loops at this point of our discussion, since the results contained here naturally apply in this higher degree of generality.

In order for the variational problem to be well-defined, we require $H$ to be \textit{quadratic at infinity}. This means that $H$ can be written as in \eqref{eq:mec} outside some compact set. This is by no means restrictive since in our arguments we will always take $k$ to lie inside a fixed bounded interval of energies and therefore one can easily find a Tonelli Hamiltonian quadratic at infinity which coincides with $H$ on the energy levels of interest. Moreover, this can be done preserving the value $e_0(H)$ (and also the value $c(H,\sigma)$, when $\sigma$ is weakly exact). Thus, in the remainder of the paper we will assume that $H$ is quadratic at infinity and we will denote by $g$ the Riemannian metric determining the kinetic term at infinity.

\subsection{General setting}\label{sec:pre}
Let $\mathcal L:T^*M\rightarrow TM$ be the \textit{Legendre transform} induced by $H$ and $L:TM\rightarrow\R$ the Fenchel dual of $H$. We call $L$ the \textit{Lagrangian function} of the system. Let $E:TM\rightarrow \R$ denote the \textit{energy function} $E:=H\circ \mathcal L^{-1}$ and observe that
\begin{equation*}
e_0(H)\ =\ \max_{q\in M} \ E(q,0)\,.
\end{equation*}
Since $H$ is quadratic at infinity, $L$ and $E$ are also quadratic at infinity. This means that outside a compact set of $TM$ we have
\begin{equation*}
L(q,v)\ =\ \frac{1}{2}|v|^2_q\ -\ V(q)\,,\quad E(q,v)\ =\ \frac{1}{2}|v|^2_q\ +\ V(q)\,,
\end{equation*}
where $|\cdot|$ is the norm induced by $g$ on $TM$. In particular, $E$ satisfies
\begin{equation}
E_0\, |v|_q^2\ -\ E_1 \ \leq \ E(q,v) \, , \quad \forall \ (q,v)\in TM\,,
\label{disuguaglianzeE}
\end{equation}
with $E_0>0$, $E_1\in \R$ are suitable constants. This inequality will play a crucial role in the proof of Theorem \ref{theorem:ps}.
We push the flow $\Phi^{H,\sigma}$ to $TM$ via $\mathcal L$ and define
\begin{equation*}
\Phi^{L,\sigma}\ :=\ \mathcal L\circ \Phi^{H,\sigma}\circ \mathcal L^{-1}\,.
\end{equation*}

In particular, closed orbits of $\Phi^{L,\sigma}$ contained in $E^{-1}(k)$ correspond to closed orbits of $\Phi^{H,\sigma}$ contained in $H^{-1}(k)$.
In the next lemma, we recall that $\Phi^{L,\sigma}$ has an intrinsic definition in terms of a perturbed Euler-Lagrange equation.
\begin{lemma}\label{lem:el}
Let $U$ be an open subset of $M$ such that $\sigma$ admits a primitive $\theta\in\Omega^1(U)$ on $U$. The restriction of the flow $\Phi^{L,\sigma}$ on $TU$ is the standard Euler-Lagrange flows of the Lagrangian $L+\theta:TU\rightarrow\R$. As a corollary, a curve $s\mapsto\gamma(s)\in M$ satisfies in every coordinate chart the perturbed Euler-Lagrange equations
\begin{equation}\label{eq:el}
\frac{d}{ds}\Big(d_vL\,(\gamma,\gamma')\Big)\ =\ d_qL\,(\gamma,\gamma')\ +\ \sigma_{\gamma}(\cdot, \gamma')
\end{equation}
if and only if $(\gamma,\gamma')\subset TM$ is a trajectory of $\Phi^{L,\sigma}$.
\end{lemma}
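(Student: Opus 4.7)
My plan is to reduce, on the open set $U$ where $\sigma = d\theta$, the twisted symplectic geometry to the standard one by means of a fiber-translation symplectomorphism, and then to identify the corresponding Lagrangian via Fenchel duality. The corollary will follow from a local coordinate expansion of the Euler-Lagrange equation for $L+\theta$.

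First I would define the fiber-translation map
\[
\Psi: T^*U \longrightarrow T^*U, \qquad \Psi(q,p) := (q, p+\theta_q).
\]
Writing $dp\wedge dq = d(p\,dq)$ and using $\Psi^*(p\,dq) = p\,dq + \pi^*\theta$, a short computation gives $\Psi^*(dp\wedge dq) = dp\wedge dq + \pi^*\sigma = \omega_\sigma$. Hence $\Psi:(T^*U,\omega_\sigma)\to(T^*U,dp\wedge dq)$ is a symplectomorphism projecting to the identity on $U$. By naturality of Hamiltonian vector fields under symplectomorphisms, the twisted flow $\Phi^{H,\sigma}$ on $T^*U$ is conjugate via $\Psi$ to the standard Hamiltonian flow of the shifted Hamiltonian $H\circ\Psi^{-1}(q,p) = H(q, p-\theta_q)$.

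Next I would use Fenchel duality to identify the Lagrangian of $H\circ\Psi^{-1}$: the change of variable $p\mapsto p-\theta_q$ inside $\sup_p[\langle p,v\rangle - H(q,p-\theta_q)]$ produces the extra fiberwise-linear summand $\theta_q(v)$, so the Lagrangian equals $L+\theta$. Since the fiber derivative of $L+\theta$ differs from that of $L$ precisely by the shift $\theta$, its Legendre transform $T^*U\to TU$ is $\mathcal L\circ \Psi^{-1}$. Composing the conjugation of Hamiltonian flows by $\mathcal L$ on both sides then yields
\[
\Phi^{L,\sigma}_t\big|_{TU} \ =\ (\mathcal L\circ\Psi^{-1})\circ \Phi^{H\circ\Psi^{-1}}_t\circ (\mathcal L\circ\Psi^{-1})^{-1},
\]
and the right-hand side is by definition the standard Euler-Lagrange flow of $L+\theta$ on $TU$.

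For the corollary, I would unfold the Euler-Lagrange equation of $L+\theta$ in a coordinate chart on $U$: using $\partial_{v^k}\theta = \theta_k(q)$ and $\sigma_{kj} = \partial_k\theta_j - \partial_j\theta_k$, the term $d_v\theta$ contributes $\dot\gamma^j\partial_j\theta_k$ on the left and $d_q\theta$ contributes $\dot\gamma^j\partial_k\theta_j$ on the right, and their difference is exactly the $k$-th component of $\sigma(\cdot,\gamma')$. Thus the Euler-Lagrange equation for $L+\theta$ is precisely \eqref{eq:el}. Since \eqref{eq:el} involves only $\sigma$ and not the chosen primitive, it makes intrinsic sense globally even when $\sigma$ is not exact on $M$, and patching the first part of the lemma over any open cover of $M$ by primitive neighborhoods gives the equivalence asserted in the corollary. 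The only real care required in the whole argument is bookkeeping of sign conventions: the Hamiltonian convention $\iota_{X_{H,\sigma}}\omega_\sigma = -dH$, the direction of the fiber shift in $\Psi$, and the identification of $\sigma(\cdot,\gamma')$ with its coordinate components must be kept consistent throughout.
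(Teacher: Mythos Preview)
Your argument is correct: the fiber-translation $\Psi(q,p)=(q,p+\theta_q)$ does pull back the standard form to $\omega_\sigma$, the Fenchel dual of $H(q,p-\theta_q)$ is indeed $L+\theta$, and your chain of conjugations cleanly identifies $\Phi^{L,\sigma}|_{TU}$ with the Euler--Lagrange flow of $L+\theta$; the coordinate computation for the corollary is also right, with the $\theta$-contributions combining to $\sigma_{kj}\dot\gamma^j=\sigma(\,\cdot\,,\gamma')_k$.

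There is nothing to compare against, however: the paper states this lemma as a recalled fact (``In the next lemma, we recall that \ldots'') and gives no proof. Your write-up is thus a self-contained verification of a result the authors take as standard; the only cosmetic point is that your caveat about sign conventions is well placed, since the paper's convention $\iota_{X_{H,\sigma}}\omega_\sigma=-dH$ is exactly the one under which your computations go through as written.
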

Thanks to Equation \eqref{eq:el} we will be able to describe periodic orbits of $\Phi^{L,\sigma}$ by a variational principle on some space of loops in $M$ that we now introduce.

Let $\T:=\R/\Z$ and define $\Lambda:= H^1(\T,M)\times (0,+\infty)$. The set $\Lambda$ can be interpreted as the space of absolutely continuous loops with square integrable weak derivative and with arbitrary period. Indeed, every absolutely continuous loop $\gamma:\R/T\Z\rightarrow M$ with $L^2$-weak derivative can be identified with the pair $(x,T)\in H^1(\T,M)\times (0,+\infty)$, where $x(t):=\gamma(Tt)$. 
Conversely, from $(x,T)\in \Lambda$ we obtain a  loop $\gamma:\R/T\Z\rightarrow M$ by $\gamma(s):=x(s/T)$. To ease the notation, we adopt the identification $\gamma=(x,T)$ throughout the paper and, to avoid confusion, 
we denote with a \textit{dot} the derivatives with respect to $t$ and with a \textit{prime} the derivatives with respect to $s$. We define  
\[l(x)\, :=\, \int_0^1 |\dot x(t)|\, dt \, , \quad e(x)\, :=\, \int_0^1 |\dot x(t)|^2\, dt\]
as the \textit{length}, respectively the $L^2$-\textit{energy} of $x$. We also consider the analogous quantities associated to $\gamma$. Clearly, we have $l(x)=l(\gamma)$ and $e(x)=T\,e(\gamma)$. 

As it is shown in \cite{AS09}, $H^1(\T,M)$ has the structure of a \textit{complete Hilbert manifold}. Its tangent space at $x\in H^1(\T,M)$ is the set of all absolutely continuous vector fields $\zeta$ along $x$ whose covariant derivative $\dot\zeta$ with respect to the Levi-Civita connection of $g$ is $L^2$-integrable. We define the inner product $g_{H^1}$ on each tangent space by
\begin{equation*}
g_{H^1}(\zeta_1,\zeta_2)_x\, :=\int_0^1\Big[g_{x}(\zeta_1,\zeta_2)+g_{x}(\dot\zeta_1,\dot\zeta_2)\Big]dt\,,\quad \forall\ \zeta_1, \zeta_2 \in T_xH^1(\T,M)\,.
\end{equation*}
Moreover, a local chart centered at any \textit{smooth} loop $x$ can be constructed as follows. Let $B_\rho^n\subset \R^n$ be the open ball of radius $\rho$ and let $\psi:\T\times B_\rho^n\rightarrow M$ be a \textit{bi-bounded time-dependent chart} for $M$. This is a smooth map such that for all $t\in\T$:
\begin{itemize}
\item $\psi(t,0)=x(t)$;
\item $\psi(t,\cdot):B^n_\rho\longrightarrow M$ is an embedding;
\item $\psi(t,\cdot)$ and its inverse have bounded $C^\infty$-norm.                                                                                                                                                                                                                                                                                                                                                                                                                                                                                                                   \end{itemize}
To ease the notation, we write 
\[\psi_t\,:=\ \psi(t,\cdot)\, , \quad \dot\psi_t\,:= \ \frac{\partial}{\partial t}\psi(t,\cdot)\, .\]
Notice that, for each $t\in\T$, $\dot\psi_t$ is a smooth section of $\psi_t^*(TM)$. Consider the maps 
\[\iota:H^1(\T,B_\rho^n)\longrightarrow H^1(\T,\T\times B_\rho^n)\, , \quad \hat \Psi_{H^1}:H^1(\T,\T\times B_\rho^n)\longrightarrow H^1(\T,M)\]
defined by
\begin{equation*}
\iota(\xi)(t)\,:=\ (t,\xi(t))\,,\quad\quad \hat\Psi_{H^1}(\hat\xi)(t)\,:=\ \psi(\hat\xi(t))\,.
\end{equation*}
The required local chart around $x$ in $H^1(\T,M)$ is 
\[\Psi_{H^1}\,:=\ \hat \Psi_{H^1}\circ\iota:H^1(\T,B_\rho^n)\longrightarrow H^1(\T,M)\, , \quad \Psi_{H^1}(\xi)(t)\,:=\ \psi_t(\xi(t))\,.\]

The set $\Lambda$ has a natural Hilbert manifold structure given by the product between that of $H^1(\T,M)$ and the standard one on $(0,+\infty)$. This means that the tangent space of $\Lambda$ admits the splitting 
\begin{equation}\label{eq:split}
T\Lambda\ =\ T H^1(\T,M)\ \oplus \ \R \, \frac{\partial}{\partial T}\, ,
\end{equation}
and it carries the product metric
\begin{equation*}
g_{\Lambda}\ :=\ g_{H^1}\ +\ dT^2\,.
\end{equation*}

The distance function $d_{\Lambda}$ induced by $g_{\Lambda}$ is not complete as the second factor is not complete with the Euclidean distance. However, $d_{\Lambda}$ is complete on all subsets of the form 
$H^1(\T,M)\times [T_-,+\infty)$, where $T_->0$ is some positive number. 

We denote the norm on $T\Lambda$ induced by $g_{\Lambda}$ simply by $|\cdot|$.
We also define local charts around $(x,T)\in\Lambda$ by
\[\Psi_{\Lambda}\,:=\ \Psi_{H^1}\times \op{Id}_{(0,+\infty)}:H^1(\T,B_\rho^n)\times (0,+\infty)\longrightarrow\Lambda\]

\noindent and observe that the connected components of $\Lambda$ correspond to the free homotopy classes $[\T,M]$ of loops in $M$. Throughout the whole paper we will denote by $\Lambda_0$ the connected component of contractible loops. 
\medskip

We are now ready to introduce the action 1-form $\eta_k\in \Omega^1(\Lambda)$. First, we consider the free-period Lagrangian action functional associated with $L$
\begin{equation}
S^L_k:\Lambda\longrightarrow\R\, , \quad S^L_k(x,T):=\ T\cdot\int_0^1 \Big[L\Big (x(t),\frac{\dot x(t)}{T}\Big )+k\Big]\, dt\,.
\end{equation}
The functional $S^L_k$ is well-defined, since $L$ is quadratic at infinity. We set
\begin{equation}\label{etak}
\eta_k\ :=\ dS^L_k\ +\ \pi_{H^1}^*\tau^\sigma\,,
\end{equation}
where $\pi_{H^1}:\Lambda\rightarrow H^1(\T,M)$ is the standard projection and 
$\tau^\sigma\in\Omega^1(H^1(\T,M))$ is the \textit{transgression of} $\sigma$:\begin{equation}
\tau^\sigma_x[\zeta]\ :=\ \int_0^1\sigma_{x(t)}(\zeta(t),\dot x(t))\,dt\,,\quad \forall\,\zeta\in T_xH^1(\T,M)\,.
\label{tauk}
\end{equation}

If $\gamma\in\Lambda$ we can compute how $\eta_k$ acts on $T_\gamma\Lambda$ using the splitting \eqref{eq:split}. If $\zeta\in T_xH^1(\T,M)$ and $\alpha(s):=\zeta(s/T)$, we find
\begin{equation}\label{eq:dif1}
(\eta_k)_{\gamma}\big[(\zeta,0)\big]\ =\ \int_0^T\Big[\,d_qL(\gamma,\gamma')[\alpha]\,+\, d_vL(\gamma,\gamma')[\alpha']\,+\, \sigma_\gamma(\alpha,\gamma')\,\Big]\, ds\,.
\end{equation}
In the direction of the period we find
\begin{align}\label{par-T}
(\eta_k)_\gamma\left[\frac{\partial}{\partial T}\right]\ =\ dS^L_k\left[\frac{\partial}{\partial T}\right]\ &=\ k\ -\ \int_0^1 E\Big(x(t),\frac{\dot x(t)}{T}\Big )\, dt\nonumber\\
&=\ k\ -\ \frac{1}{T}\int_0^T E\big(\gamma(s),\gamma'(s)\big)\, ds.
\end{align}
Arguing as in \cite[Lemma 2.1]{Con06} and making use of Equations \eqref{eq:dif1} and \eqref{par-T} we arrive at the following characterization of the zeros of $\eta_k$. 
\begin{lemma}\label{zeridietak}
A loop $\gamma\in\Lambda$ satisfies $(\eta_k)_\gamma=0$ if and only if $(\gamma,\gamma')\subset TM$ is a periodic orbit of $\Phi^{L,\sigma}$ contained in $E^{-1}(k)$. 
\end{lemma}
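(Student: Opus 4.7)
The plan is to exploit the splitting \eqref{eq:split} together with the formulas \eqref{eq:dif1} and \eqref{par-T}, so that $(\eta_k)_\gamma=0$ decomposes into the simultaneous vanishing of $(\eta_k)_\gamma[(\zeta,0)]$ for every $\zeta\in T_xH^1(\T,M)$ and of $(\eta_k)_\gamma\big[\tfrac{\partial}{\partial T}\big]$. I will show that the horizontal component encodes the perturbed Euler--Lagrange equations \eqref{eq:el}, while the period component encodes the energy prescription $E(\gamma,\gamma')\equiv k$.

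The core of the argument is a local identity. Cover $\T$ by finitely many closed arcs $I_1,\dots,I_N$ such that $\gamma(I_j)\subset U_j$, where $U_j\subset M$ is open and admits a primitive $\theta_j\in\Omega^1(U_j)$ of $\sigma|_{U_j}$, and set $\Theta_j(q,v):=(\theta_j)_q(v)$. A short computation in local coordinates, using only the antisymmetry of $d\theta_j=\sigma$, gives the pointwise identity on $I_j$
\begin{equation*}
\sigma_\gamma(\alpha,\gamma')\ =\ d_q\Theta_j(\gamma,\gamma')[\alpha]\ +\ d_v\Theta_j(\gamma,\gamma')[\alpha']\ -\ \frac{d}{ds}\Big[(\theta_j)_\gamma(\alpha)\Big]\,.
\end{equation*}
Therefore, whenever the rescaled variation $\alpha$ is compactly supported in $I_j$, the integrand in \eqref{eq:dif1} differs from the classical Euler--Lagrange variation of the Lagrangian $L+\Theta_j$ tested against $\alpha$ only by a total derivative, which integrates to zero.

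For the $\Rightarrow$ direction, testing $(\eta_k)_\gamma=0$ against variations $\zeta$ whose rescaled $\alpha$ is compactly supported in a single arc $I_j$ yields the weak Euler--Lagrange equation for $L+\Theta_j$ on $I_j$; the Tonelli assumption allows one to bootstrap the a priori $H^1$-regularity of $\gamma$ to smoothness, and Lemma~\ref{lem:el} identifies $(\gamma,\gamma')|_{I_j}$ with a trajectory of $\Phi^{L,\sigma}$. Since the arcs cover $\T$, $(\gamma,\gamma')$ is a $T$-periodic trajectory of $\Phi^{L,\sigma}$. Knowing this, $E=H\circ\mathcal L^{-1}$ is constant along $(\gamma,\gamma')$ because $H$ is a first integral of $\Phi^{H,\sigma}$, so \eqref{par-T} collapses to $(\eta_k)_\gamma\big[\tfrac{\partial}{\partial T}\big]=k-E(\gamma,\gamma')$, whose vanishing is precisely $(\gamma,\gamma')\subset E^{-1}(k)$. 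For the $\Leftarrow$ direction, one decomposes an arbitrary $\zeta$ through a partition of unity subordinate to $\{I_j\}$; on each piece Lemma~\ref{lem:el} and integration by parts make $(\eta_k)_\gamma[(\zeta,0)]$ vanish, while $E\equiv k$ kills the period component via \eqref{par-T}.

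The only genuinely non-trivial step is the local identity relating the transgression $\tau^\sigma$ to the Euler--Lagrange form of the magnetic Lagrangian $L+\Theta_j$; everything else reduces to the splitting \eqref{eq:split}, the conservation of energy for autonomous magnetic flows, and standard Tonelli regularity. Crucially, no global primitive of $\sigma$ is ever needed, which is exactly why this variational characterization is available for arbitrary (possibly non-weakly-exact) magnetic forms.
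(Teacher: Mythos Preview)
Your argument is correct and is precisely the approach the paper has in mind: the paper's own proof is a one-line reference to \cite[Lemma 2.1]{Con06} together with \eqref{eq:dif1} and \eqref{par-T}, and you have supplied the details---the local identity reducing the transgression term to the Euler--Lagrange form of $L+\Theta_j$ up to an exact derivative, the Tonelli regularity bootstrap, and the partition-of-unity argument for the converse. The only mild addition over Contreras' setting is the need for local primitives of $\sigma$, which you handle correctly and which is already foreshadowed by Lemma~\ref{lem:el}.
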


We write now $\eta_k$ in a local chart in order to investigate its regularity as a section of the bundle $T^*\Lambda\rightarrow\Lambda$. Our computations follow Section 3 in \cite{AS09}. We decided to include them here anyway since in the original paper, there is a little inaccuracy in the definition of the pull-back Lagrangian to a local chart.

\begin{lemma}\label{lem:lagloc}
Let $\psi$ be a bi-bounded time-dependent chart for $M$ and let $\Psi_{\Lambda}$ be the associated local chart of $\Lambda$. There exists a smooth function 
\[L_{\psi}:\T\times TB_\rho^n\times(0,+\infty)\longrightarrow\R\]
such that $\Psi_{\Lambda}^*\eta_k=dS^{L_{\psi}}_k$, where $S^{L_{\psi}}_k:H^1(\T,B^n_\rho)\times(0,+\infty)\rightarrow\R$ is defined by
\begin{equation}\label{eq:lagloc}
S^{L_{\psi}}_k(y,T):=\ T\cdot\int_0^1 \Big[L_{\psi}\Big (t,y(t),\frac{\dot y(t)}{T},T\Big )+k\Big]\, dt\,.
\end{equation}

For any $T_->0$ and for any $T\geq T_-$ the family of functions $L_{\psi}(\,\cdot\,,T):\T\times TB_\rho^n\rightarrow\R$ is Tonelli and quadratic at infinity uniformly in $T$. Namely, there exist positive constants $L_0,L_1,L_2$ depending on $T_-$ such that for all $(t,y,\xi)\in \T\times TB_\rho^n$
\begin{align}
d_{\xi\xi}L_\psi(t,y,\xi,T)\ &\geq  \ L_2\,;\label{tonloc1}\\
\lim_{|\xi|\rightarrow+\infty}\frac{L_\psi(t,y,\xi,T)}{|\zeta|}\ &=\ +\infty\,,\quad\quad\mbox{uniformly in }T\in [T_-,+\infty)\,;\label{tonloc2}\\
|d_y L_\psi(t,y,\xi,T)|\ &\leq\ L_0(1+|\xi|^2)\,,\quad |d_\xi L_\psi(t,y,\xi,T)|\ \leq\ L_1(1+|\xi|)\,.\label{qualoc}
\end{align}
\end{lemma}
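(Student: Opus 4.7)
The plan is as follows. First, I would build a local primitive of the magnetic form in the chart. Since $B_\rho^n$ is contractible, I decompose
\[
\psi^*\sigma\ =\ dt\wedge\hat\beta_t\ +\ \omega_t\,,
\]
with spatial part $\omega_t:=\psi_t^*\sigma$ and time-mixed part $\hat\beta_t(y)[\alpha]:=\sigma_{\psi_t(y)}(\dot\psi_t(y),d\psi_t(y)[\alpha])$. A parametric Poincar\'e lemma gives a smooth family $\theta_t\in\Omega^1(B_\rho^n)$ with $d_y\theta_t=\omega_t$. The closedness $d(\psi^*\sigma)=0$ is equivalent to $d_y\hat\beta_t=\partial_t\omega_t$, so $\partial_t\theta_t-\hat\beta_t$ is $d_y$-closed on $B_\rho^n$; a second parametric Poincar\'e argument yields a smooth $Q:\T\times B_\rho^n\to\R$ with $d_y Q=\partial_t\theta_t-\hat\beta_t$.

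Second, I would set
\[
L_\psi(t,y,\xi,T)\ :=\ L\bigl(\psi_t(y),\ \tfrac{1}{T}\dot\psi_t(y)+d\psi_t(y)[\xi]\bigr)\ +\ \theta_t(y)[\xi]\ +\ \tfrac{1}{T}Q(t,y)
\]
and verify the differential identity $\Psi_\Lambda^*\eta_k=dS^{L_\psi}_k$. Plugging $\xi=\dot y/T$ into \eqref{eq:lagloc} and expanding, the $T$-factors in the last two summands cancel, so that
\[
S^{L_\psi}_k(y,T)\ =\ \Psi_\Lambda^*S^L_k(y,T)\ +\ F(y)\,,\qquad F(y)\,:=\,\int_0^1\theta_t(y)[\dot y]\,dt+\int_0^1 Q(t,y)\,dt\,.
\]
Computing $dF_y[\alpha]$ by integration by parts of the $\theta_t[\dot\alpha]$ contribution and using $d_y Q=\partial_t\theta_t-\hat\beta_t$, one obtains $\int_0^1[d\theta_t(\alpha,\dot y)-\hat\beta_t[\alpha]]\,dt$. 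Expanding \eqref{tauk} along the lifted loop $x(t)=\psi_t(y(t))$ shows this is exactly $\Psi_{H^1}^*\tau^\sigma$, so \eqref{etak} yields the required identity.

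Third, the Tonelli and quadratic-at-infinity estimates \eqref{tonloc1}--\eqref{qualoc} follow from the bi-boundedness of $\psi$: $\psi_t,\dot\psi_t,d\psi_t,d\psi_t^{-1}$ and all their higher derivatives are uniformly bounded. Fiber convexity \eqref{tonloc1} comes from $d_{\xi\xi}L_\psi=(d\psi_t)^*\,d_{vv}L\,(d\psi_t)$ combined with the uniform convexity of $L$ and the uniform lower bound on the singular values of $d\psi_t$. For superlinearity \eqref{tonloc2}, the argument $v=\tfrac{1}{T}\dot\psi_t(y)+d\psi_t(y)[\xi]$ satisfies $|v|\geq c|\xi|-C/T_-$ for $T\geq T_-$, so the superlinearity of $L$ in $v$ transfers to $L_\psi$ in $\xi$; the extra terms $\theta_t[\xi]+Q/T$ are at most linear in $|\xi|$ and bounded in $T\geq T_-$, hence harmless. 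The growth bounds \eqref{qualoc} come from the quadratic, respectively linear, growth in $v$ of $d_qL$ and $d_vL$, together with the boundedness of the $\psi$-derivatives in the chain rule.

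The main subtlety I anticipate is the genuine $T$-dependence of $L_\psi$ through the term $\dot\psi_t(y)/T$: as $T\to 0^+$ this contribution diverges and the uniform Tonelli constants would break down, which is exactly why the restriction $T\geq T_-$ must enter the statement. A secondary delicate point is the existence of the global-in-$t$ function $Q$: this relies crucially on the closedness of $\sigma$ via $d_y\hat\beta_t=\partial_t\omega_t$, and without it the transgression would fail to admit a local Lagrangian primitive.
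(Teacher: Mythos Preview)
Your proposal is correct and follows essentially the same route as the paper. The only cosmetic difference is in how the local primitive of the transgression is obtained: the paper invokes $H^2(\T\times B_\rho^n,\R)=0$ to pick a single primitive $\hat\theta=V_t\,dt+\theta^t$ of $\psi^*\sigma$ and then uses the relation $\Psi_{H^1}^*\tau^\sigma=d(S^{\hat\theta}\circ\iota)$, whereas you build this primitive by hand via a two-step parametric Poincar\'e lemma (your $\theta_t$ and $Q$ are exactly the paper's $\theta^t$ and $V_t$); the resulting $L_\psi$ and the estimates are identical.
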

\begin{proof}
First, we express the velocity of $\gamma=(x,T)$ in a local chart. Let $\beta:\R/T\Z\rightarrow B_\rho^n$ be given by $\gamma(s)=\psi_{s/T}(\beta(s))$ and compute
\begin{align*}
\gamma'(s ) \ = \ \frac{d}{ds}\left [\psi_{s/T}\left(\beta(s)\right)\right ](s) \ &=\  \frac{\dot\psi_{s/T}(\beta(s))}{T}\ +\ d_{\beta(s)}\psi_{s/T}\big [\beta'(s)\big ]\\
                    &=\ \frac{\dot\psi_{t}(y(t))}{T}\ +\ d_{y(t)}\psi_{t}\Big [\frac{\dot y(t)}{T}\Big ]\,,
\end{align*}

\noindent where $t=s/T\in\T$ and $y(t):=\beta(Tt)$. If we define $D\psi:\T\times TB_\rho^n\times(0,+\infty)\rightarrow TM$ by
\begin{equation*}
D\psi(t,y,\xi,T)\,:=\ \left(\psi_t(y)\, ,\ \frac{\dot\psi_t(y)}{T}\ +\ d_{y}\psi_t[\xi]\right)\,,
\end{equation*}
then we have
\begin{equation}\label{eq:locder}
\left(x\, ,\ \frac{\dot x}{T}\right)\ =\ D\psi\left(t\, ,\ y\, ,\ \frac{\dot y}{T}\, ,\ T\right)\,.
\end{equation}
Using \eqref{eq:locder} we write $S^L_k$ in a local chart. If $(y,T)\in H^1(\T,B^n_\rho)\times(0,+\infty)$, then
\begin{equation*}
(S^L_k\circ \Psi_{\Lambda})(y,T)\ = \ T\cdot \int_0^1 \Big[\,(L\circ D\psi)\Big (t,y,\frac{\dot y}{T},T\Big )+k\,\Big]\, dt\,,
\end{equation*}
Since $H^2(\T\times B_\rho^n,\R)=0$, there is $\hat\theta\in\Omega^1(\T\times B_\rho^n)$ with $\psi^*\sigma=d\hat\theta$ and hence we get 
\[\hat\Psi_{H^1}^*\tau^\sigma\ =\ \tau^{\psi^*\sigma}\ =\ dS^{\hat\theta}\, ,\]
where 
\begin{equation*}
S^{\hat\theta}:H^1(\T,\T\times B_\rho^n)\longrightarrow \R\, , \quad S^{\hat\theta}(\hat y)\,:=\,\int_0^1\hat y^*\hat\theta\,.
\end{equation*}
For every $(t,y)\in\T\times B_\rho^n$, we can write 
\[\hat\theta_{(t,y)}\ =\ V_t(y)dt\ +\ \theta^t_y\]
for suitable $V_t:B_\rho^n\rightarrow \R$ and $\theta^t\in\Omega^1(B_\rho^n)$. By definition of $\iota$ and $\Psi_{H^1}$ we get 
\begin{equation*}
\Psi_{H^1}^*\tau^\sigma\ =\ d(S^{\hat\theta}\circ\iota)\quad \mbox{and}\quad S^{\hat\theta}(\iota(y))\ =\ \int_0^1 \Big[V_t(y(t))\,+\,\theta^t_{y}(\dot y)\Big]\,dt\,.
\end{equation*}
Thus, \eqref{eq:lagloc} holds if we set
\begin{equation}
L_{\psi}(t,y,\xi,T):=\ (L\circ D\psi)(t,y,\xi,T)\ +\ \frac{V_t(y)}{T}\ +\ \theta^t_y(\xi)\,.
\end{equation}
Since $L$ is Tonelli and $\psi$ bi-bounded, relations \eqref{tonloc1}, \eqref{tonloc2} and \eqref{qualoc} follow.
\end{proof}

\begin{corollary}\label{cor:reg}
The $1$-form $\eta_k$ is locally uniformly Lipschitz. Moreover, its integral over any closed differentiable path $u:\T\rightarrow \Lambda$ depends only on the free homotopy class of $u$. In this sense, we say that $\eta_k$ is $\mathsf{closed}$.
\end{corollary}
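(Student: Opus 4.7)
The strategy is to derive both properties from the local-chart expression of Lemma~\ref{lem:lagloc}, namely $\Psi_\Lambda^*\eta_k=dS^{L_\psi}_k$, combined with standard $H^1$-regularity estimates for Tonelli actions and, for the closedness, an identification of the transgression $\tau^\sigma$ as a de Rham integral over a $2$-torus in $M$.

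For the local Lipschitz regularity, I would fix $\gamma_0=(x_0,T_0)\in\Lambda$ and pick a bi-bounded chart $\psi$ centered near $x_0$, so that $\gamma_0$ lies in the image of a chart $\Psi_\Lambda$ defined over $H^1(\T,B^n_\rho)\times[T_-,T_+]$ with $T_->0$. On this domain the Lagrangian $L_\psi$ is smooth and, by \eqref{tonloc1}--\eqref{qualoc}, satisfies uniform Tonelli and quadratic-at-infinity estimates. The argument of \cite[Section~3]{AS09}—uniform fibrewise convexity together with the affine and quadratic bounds on $d_\xi L_\psi$ and $d_y L_\psi$—shows that such an action functional is of class $C^{1,1}_{\text{loc}}$ in $(y,T)$; that is, its differential is locally Lipschitz. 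Since $\Psi_\Lambda^*\eta_k=dS^{L_\psi}_k$ and $\Psi_\Lambda^{\pm1}$ are bi-Lipschitz on bounded sets, a local Lipschitz bound for $\eta_k$ near $\gamma_0$ follows. Covering $\Lambda$ by such charts yields the first claim.

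For closedness, write globally $\eta_k=dS^L_k+\pi_{H^1}^*\tau^\sigma$. Given a $C^1$ loop $u:\T\rightarrow\Lambda$, the exact piece contributes $\int_u dS^L_k=0$. Setting $v:=\pi_{H^1}\circ u:\T\rightarrow H^1(\T,M)$ and $\bar v(s,t):=v(s)(t)\in M$, a direct unwinding of the definition \eqref{tauk} gives
\[
\int_u \pi_{H^1}^*\tau^\sigma\ =\ \int_0^1\!\!\int_0^1 \sigma_{\bar v(s,t)}\bigl(\partial_s\bar v,\partial_t\bar v\bigr)\,dt\,ds\ =\ \int_{\T\times\T}\bar v^*\sigma.
\]
Since $\sigma$ is a closed $2$-form on $M$, Stokes' theorem applied to a smooth homotopy between two freely homotopic maps $\T^2\rightarrow M$ shows that the right-hand side depends only on the free homotopy class of $\bar v$, hence of $u$; this gives the second claim.

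The only delicate point is the first part: verifying that the bounds \eqref{tonloc1}--\eqref{qualoc} really force $dS^{L_\psi}_k$ to be locally Lipschitz in the $H^1\times\R$ topology, rather than merely continuous. Particular care is needed because $L_\psi$ depends explicitly on $T$ through the term $V_t(y)/T$, so it is precisely the uniform-in-$T$ character of the estimates on compact subintervals $[T_-,T_+]\subset(0,+\infty)$ that makes the Abbondandolo--Schwarz regularity argument apply on the full extended space $\Lambda=H^1(\T,M)\times(0,+\infty)$ and that allows the Lipschitz constant to be controlled in the $T$-direction as well.
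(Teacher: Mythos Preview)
Your Lipschitz argument is the same as the paper's: both reduce to Lemma~\ref{lem:lagloc} and then invoke \cite[Lemma~3.1(i)]{AS09} to conclude that the local primitive $S^{L_\psi}_k$ is $C^{1,1}_{\mathrm{loc}}$, hence $\eta_k=dS^{L_\psi}_k$ is locally Lipschitz in each chart.

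For closedness you take a genuinely different, but correct, route. The paper simply observes that by Lemma~\ref{lem:lagloc} the form $\eta_k$ is \emph{locally} the differential of a $C^1$ function; a locally exact continuous $1$-form has vanishing integral over every null-homotopic loop, so its integral over an arbitrary loop depends only on the free homotopy class, and nothing further is needed. You instead keep the global splitting $\eta_k=dS^L_k+\pi_{H^1}^*\tau^\sigma$, discard the exact piece, and identify $\int_u\pi_{H^1}^*\tau^\sigma=\int_{\T^2}\bar v^*\sigma$ as a de~Rham integral on $M$, then use closedness of $\sigma$ and Stokes. This is valid (with the minor caveat that $\bar v$ is only $H^1$ in the $t$-variable, so one should smooth before applying Stokes), and it buys something the paper's argument does not: an explicit formula for the periods of $\eta_k$ as integrals of $\sigma$ over $2$-tori in $M$. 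That makes the later remark---that $\eta_k$ is exact on $\Lambda_0$ precisely when $\sigma|_{\pi_2(M)}=0$---completely transparent. The paper's route is shorter and uses only the local-chart lemma already established; yours carries more geometric information.
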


\begin{proof}
By Lemma \ref{lem:lagloc} and the computations in \cite[Lemma 3.1(i)]{AS09}, $\eta_k$ is locally the differential of a $C^1$-function with locally bounded derivatives. This observation implies both of the statements that we have to prove. 
\end{proof}

In general $\eta_k$ is not globally \textit{exact} on $\Lambda$. To this purpose we observe that $\sigma$ is weakly-exact if and only if $[\sigma]\in H^2(M,\R)$ vanishes over $\pi_2(M)$. As we discuss in Section \ref{sec:we}, this means that $\eta_k$ is \textit{exact} on $\Lambda_0$ if and only if $\sigma$ is weakly-exact. However, even if $\sigma$ is weakly exact it can still happen that $\eta_k$ is not globally exact on $\Lambda$. For example, if $M=\T^2$ and $[\sigma]\neq0\in H^2(\T^2,\R)$, then $\eta_k$ is not exact on any connected component of $\Lambda\setminus \Lambda_0$. On the other hand, if the lift of $\sigma$ to the universal cover admits a bounded primitive, then $\eta_k$ is exact on $\Lambda$ (see \cite[Lemma 2.2]{Mer10}).


\subsection{A compactness criterion for vanishing sequences}

In view of Lemma \ref{zeridietak}, in order to find periodic orbits for $\Phi^{L,\sigma}$ we have to show that the set of zeros of $\eta_k$ is non-empty. The mechanism we use to construct such zeros is to look for limit points of vanishing sequences for $\eta_k$. These sequences are the generalization of Palais-Smale sequences to our setting.

\begin{definition}
We call $(\gamma_h)\subset\Lambda$ a $\mathsf{vanishing \ sequence}$ for $\eta_k$, if 
\[|\eta_k|_{\gamma_h}\ \longrightarrow\ 0\, .\]
\end{definition}

Since $\eta_k$ is continuous, the set of limit points of vanishing sequences coincides with the set of zeros of $\eta_k$. Therefore, it is crucial to know under which hypotheses a vanishing sequence has a limit point. Clearly, if $T_h\rightarrow 0$ or $T_h\rightarrow \infty$, the limit points set is empty. The following theorem shows that the converse is also true.

\begin{theorem}\label{theorem:ps}
Let $(\gamma_h)_{h\in\N}$ be a vanishing sequence for $\eta_k$ in a given connected component of $\Lambda$. Then there exists $C>0$ such that
\begin{equation}\label{en-per}
e(x_h)\ \leq\ CT_h^2\, , \quad \forall \ h\in \N\, . 
\end{equation}
As a consequence, the following two statements hold:
\begin{enumerate}
\item If $T_h$ tends to zero, then $\ e(x_h)=O(T_h^2)$.
\item If $T_h$ is uniformly bounded and bounded away from zero, then $(\gamma_h)_{h\in\N}$ has a converging subsequence. 
\end{enumerate}
\end{theorem}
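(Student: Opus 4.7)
The plan is to first establish the quantitative bound \eqref{en-per} by a direct computation that tests $\eta_k$ against the period direction, and then to deduce (1) as a tautology and (2) by a Palais--Smale-type compactness argument carried out in a local chart.

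For \eqref{en-per}, the starting observation is that in the product metric $g_\Lambda = g_{H^1} + dT^2$ the vector $\partial/\partial T$ has unit length, so
\[
\left|(\eta_k)_{\gamma_h}\!\left[\tfrac{\partial}{\partial T}\right]\right|\ \leq\ |\eta_k|_{\gamma_h}\ \longrightarrow\ 0 .
\]
Combining the explicit formula \eqref{par-T}, the quadratic lower bound \eqref{disuguaglianzeE} for $E$, and the identity $\int_0^{T_h}|\gamma_h'|^2\,ds = e(x_h)/T_h$, a short rearrangement yields
\[
\frac{E_0\, e(x_h)}{T_h^{\,2}}\ \leq\ k + E_1 + |\eta_k|_{\gamma_h},
\]
which is exactly \eqref{en-per}. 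Claim (1) is an immediate rewriting.

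For (2), if $T_-\leq T_h\leq T_+$ then \eqref{en-per} gives a uniform $H^1$-bound on $x_h$, so closedness of $M$ and Rellich's theorem produce, up to a subsequence, $x_h\to x_\infty$ strongly in $C^0(\T,M)$ and weakly in $H^1(\T,M)$, together with $T_h\to T_\infty\in[T_-,T_+]$. The crux is the upgrade to strong $H^1$-convergence. Pass to a bi-bounded time-dependent chart $\psi$ centered at a smooth approximation of $x_\infty$: by Lemma \ref{lem:lagloc}, $\Psi_\Lambda^*\eta_k = dS^{L_\psi}_k$ and the Lagrangian $L_\psi(\cdot,T)$ satisfies the uniform Tonelli bounds \eqref{tonloc1}--\eqref{qualoc} throughout $T\in[T_-,+\infty)$. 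Writing $x_h = \Psi_{H^1}(y_h)$ and $x_\infty = \Psi_{H^1}(y_\infty)$, I would test $dS^{L_\psi}_k|_{(y_h,T_h)}$ on the tangent vector $(y_h-y_\infty,0)$. The $\partial_y L_\psi$-piece is controlled by \eqref{qualoc}, the uniform bound on $e(y_h)/T_h^{\,2}$, and the $C^0$-convergence $y_h\to y_\infty$, so one is left with
\[
\int_0^1 \partial_\xi L_\psi\!\left(t,y_h,\frac{\dot y_h}{T_h},T_h\right)\!\cdot(\dot y_h-\dot y_\infty)\,dt\ \longrightarrow\ 0 .
\]
Subtract the comparison term with $\dot y_\infty/T_h$ in place of $\dot y_h/T_h$: by dominated convergence it tends strongly in $L^2$ to $\partial_\xi L_\psi(t,y_\infty,\dot y_\infty/T_\infty,T_\infty)$ and hence pairs to zero with the weakly-null sequence $\dot y_h-\dot y_\infty$. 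The uniform convexity \eqref{tonloc1} then forces $\|\dot y_h-\dot y_\infty\|_{L^2}\to 0$, giving strong $H^1$-convergence and thus $\gamma_h\to(x_\infty,T_\infty)$ in $\Lambda$.

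The main obstacle is this last, uniform-convexity step: all the preparation in Lemma \ref{lem:lagloc}, and in particular the uniformity of the Tonelli constants $L_0,L_1,L_2$ along $T\in[T_-,+\infty)$, is tailored precisely to make this monotonicity-in-$\xi$ trick run. Without the lower bound $T_->0$ the chart Lagrangian $L_\psi(\cdot,T)$ loses its uniform Tonelli character and the convexity estimate degenerates, which is why the hypothesis that $T_h$ be bounded away from zero is genuinely needed in (2) and cannot be removed by this method.
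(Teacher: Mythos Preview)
Your proof is correct and follows essentially the same route as the paper: both arguments test $\eta_k$ against $\partial/\partial T$ and use \eqref{disuguaglianzeE} to obtain \eqref{en-per}, and for (2) both pass to a bi-bounded chart, test $dS^{L_\psi}_k$ against $(y_h-y_\infty,0)$, and use the uniform Tonelli bounds of Lemma~\ref{lem:lagloc} to upgrade weak to strong $H^1$-convergence. The only difference is cosmetic: the paper outsources the final convexity/monotonicity step to \cite[Lemma~5.3]{Abb13}, whereas you spell it out explicitly.
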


\begin{proof}
Since the vector field $\frac{\partial}{\partial T}$ has norm $1$, we have
\begin{equation}
\left|\eta_k\left[\frac{\partial}{\partial T}\right]\right|\ \leq\ \big|\eta_k\big|
\end{equation}
Hence, by \eqref{par-T} and the fact that $(\gamma_h)$ is a vanishing sequence, we get
\[\alpha_h\ := \ \frac{1}{T_h}\int_0^{T_h} \Big [ E\Big (\gamma_h(s),\gamma'_h (s) \Big ) - k\Big ]\, ds \ =\ -\, (\eta_k)_{\gamma_h}\left[\frac{\partial}{\partial T}\right] \ \longrightarrow \ 0 \, .\]
On the other hand, since $E$ is quadratic at infinity, \eqref{disuguaglianzeE} yields
\[\alpha_h \ \geq \ \frac{E_0}{T_h}\, e(\gamma_h) \  - \ (E_1+k)\]
Therefore,
\[e(x_h)\ =\ T_he(\gamma_h) \ \leq \ \ \frac{T^2_h}{E_0} \, \big(\alpha_h\, +\, k\,+\,E_1\big)\]
hence proving \eqref{en-per}. Statement (1) follows at once from what we have just proved.

We now show (2). Since the periods are by assumption uniformly bounded from above, we know by \eqref{en-per} that $e(x_h)$ is also uniformly bounded. Hence, the curves $x_h$ are uniformly $1/2$-H\"older continuous. By the Ascoli-Arzel\`a theorem, up to a subsequence they converge uniformly to a continuous curve $x$. Thus, there exists a smooth curve $x_0\in H^1(\T,M)$ such that $x$ and, up to a subsequence, all the $x_h$ belong to the image of $\psi_{x_0}$. Let us define $y$ and $y_h$ as $x=\Psi_{H^1}(y)$ and $x_h=\Psi_{H^1}(y_h)$. Taking $T=1$ in \eqref{eq:locder} and using the fact that $\psi^*_tg$ is equivalent to the Euclidean metric on $\R^n$, we see that $y_h$ is a bounded sequence in $H^1(\T,B^n_\rho)$. Hence $y\in H^1(\T,B^n_\rho)$ and, up to a subsequence, $y_h$ converges $H^1$-weakly to $y$. Exploiting once again that $(\gamma_h)$ is a vanishing sequence, we have
\begin{equation}
o(1)\ =\ d_{(y_h,T_h)}S^{L_\psi}_k\big[(y_h-y,0)\big]\,.
\end{equation}

Now one argues as in the part of the proof of Lemma 5.3 in \cite{Abb13} after Equation $(5.2)$ and the thesis follows.
\end{proof}

Our strategy to prove Theorem \ref{theoremb} and \ref{theorema} is to construct vanishing sequences via a minimax method. For the argument we will make use of a vector field on $\Lambda$ which generalizes the gradient of $S_k^{L}$. In the next subsection, we briefly discuss what are the properties of this vector field. 


\subsection{The action variation along a path and the gradient of the action}

We know that when the $1$-form $\tau^\sigma$ is non-exact, a global primitive $S_k$ of $\eta_k$ on $\Lambda$ does not exist. However, if $u:[0,1]\rightarrow \Lambda$ is of class $C^1$, the variation
$\Delta S_k(u)\in\R$ of $\eta_k$ along the path $u$ is always well defined. It is given by the formula
\begin{equation}
\Delta S_k(u)(a)\, :=\ \int_0^a u^*\eta_k\,.  
\label{variazionelungou}
\end{equation}

Then, since $\eta_k$ is closed, we extend the definition of $\Delta S_k$ to any continuous path by uniform approximation with paths of class $C^1$.
Observe that if $z:Z\rightarrow\Lambda$ is a smooth map such that $z^*\eta_k$ admits a primitive $S_k(z):Z\rightarrow\R$, then for every path $u:[0,1]\rightarrow Z$, there holds
\begin{equation}\label{eq:var}
\Delta S_k(z\circ u)(a)\ =\ S_k(z)(u(a))\ -\ S_k(z)(u(0))\,,\quad\forall\,a\in[0,1]\,.
\end{equation}

The next lemma describes how $\Delta S_k$ changes under deformation of paths with the first endpoint fixed. The proof follows from the fact that $\eta_k$ is a closed form.

\begin{lemma}\label{lem:dec}
Let $R>0$ and suppose that $u:[0,R]\times[0,1]\rightarrow \Lambda$ is a homotopy of paths. Denote by $u_r:=u(r,\cdot)$ and $u^a:=u(\cdot,a)$ the paths in $\Lambda$ obtained keeping one of the variables fixed. If $u^0$ is constant, then
\begin{equation}\label{eq:dec}
\Delta S_k(u_R)(1)\ =\ \Delta S_k(u_0)(1)\ +\ \Delta S_k(u^1)(R)\,.
\end{equation}
\end{lemma}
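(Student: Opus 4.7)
The plan is to show that the identity \eqref{eq:dec} is precisely the statement that $\eta_k$ integrates to zero around the boundary of the rectangle $[0,R]\times[0,1]$ mapped into $\Lambda$ by $u$, with the constant edge contributing nothing. The whole argument rests on the closedness of $\eta_k$ established in Corollary \ref{cor:reg}.

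First, I would reduce to the case in which $u$ is $C^1$. Indeed, $\Delta S_k$ was defined for arbitrary continuous paths by uniform approximation with $C^1$ paths, and for a continuous homotopy one can smooth $u$ in both variables simultaneously. Both sides of \eqref{eq:dec} depend continuously on such approximations (via Lemma \ref{lem:lagloc} and the local Lipschitz property of $\eta_k$), so it is enough to prove the identity for $C^1$ homotopies.

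Next, assuming $u$ is $C^1$, I would concatenate the four sides of the rectangle, oriented counterclockwise starting at $(0,0)$, into a single closed path $v:\T\to\Lambda$:
\[
v \ =\ u^0\, \cdot\, u_R\, \cdot\, \overline{u^1}\, \cdot\, \overline{u_0}\, ,
\]
where the overline denotes path reversal. Applying \eqref{variazionelungou} edge by edge and using that reversing a path reverses the sign of the line integral gives
\[
\int_\T v^*\eta_k \ =\ \Delta S_k(u^0)(R)\ +\ \Delta S_k(u_R)(1)\ -\ \Delta S_k(u^1)(R)\ -\ \Delta S_k(u_0)(1)\, .
\]
The loop $v$ is nullhomotopic in $\Lambda$: the map $u$ itself, reparametrized on concentric rectangles shrinking to the point $u(0,0)$, furnishes an explicit homotopy of $v$ to a constant loop. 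By Corollary \ref{cor:reg}, the integral on the left hand side therefore vanishes.

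Finally, the hypothesis that $u^0$ is constant means $(u^0)^*\eta_k\equiv 0$, hence $\Delta S_k(u^0)(R)=0$, and rearranging the remaining three terms yields \eqref{eq:dec}. The only delicate point is bookkeeping of orientations on the four edges, but this is completely routine; I do not expect any serious obstacle. If one prefers to avoid the explicit nullhomotopy, the same conclusion follows locally: by Lemma \ref{lem:lagloc} the pullback $u^*\eta_k$ is an ordinary closed $1$-form (in fact a differential of a $C^1$-function) when read in charts of $\Lambda$, so elementary Stokes on the rectangle $[0,R]\times[0,1]$ gives the vanishing of the boundary integral directly.
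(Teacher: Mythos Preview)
Your argument is correct and is precisely the approach the paper has in mind: the paper does not give a detailed proof but merely states that the result ``follows from the fact that $\eta_k$ is a closed form,'' and you have unpacked this exactly as intended, integrating $\eta_k$ around the boundary of the rectangle and invoking Corollary \ref{cor:reg}.
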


We now proceed to define the desired gradient. First we consider the vector field $-\sharp\eta_k$, where $\sharp$ is the duality between $T\Lambda$ and $T^*\Lambda$ given by the metric $g_{\Lambda}$. By Corollary \ref{cor:reg} $-\sharp\eta_k$ is locally uniformly Lipschitz and hence we have local existence and uniqueness for solutions of the associated Cauchy problem for positive times. However, the maximal solutions of the Cauchy problem might not be defined for \textit{all} positive times for two reasons. First, $\Lambda$ is not metrically complete so that the period could go to zero in finite time. Second, $|\eta_k|$ is not uniformly bounded on $\Lambda$ so that the trajectories of the flow could escape to infinity in finite time. To avoid the latter problem we consider the normalized vector field 
\begin{equation}
X_k\ :=\ \frac{-\,\sharp\eta_k}{\sqrt{1+|\eta_k|^2}}\,.
\end{equation}
We define $\Phi^{k}$ to be the positive semi-flow of $X_k$ on $\Lambda$. Its flow lines are then the maximal solutions 
\[u_{(x,T)}:\ [0,R_{(x,T)})\ \longrightarrow\ \Lambda\]
of the Cauchy problem associated to $X_k$ with initial condition $u_{(x,T)}(0)=(x,T)$. Here $(x,T)$ is any element in $\Lambda$ and $R_{(x,T)}$ is some number in $(0,+\infty]$. By definition, we have 
\[\Phi^{k}_r(x,T)\ =\ u_{(x,T)}(r)\ =: (x(r),T(r))\, .\]
We say that $\Phi^k$ is \textit{complete}, if $R_{(x,T)}=+\infty$ for all $(x,T)$. It is shown in \cite[Example 6.8]{Con06} that the semi-flow $\Phi^k$ is not complete on $\Lambda_0$, if $k>\min E$. However, the next result poses some restrictions on the flow lines with finite maximal interval of definition. Our proof follows \cite[Lemma 6.9]{Con06}.

\begin{proposition}\label{pro:comp}
Let $u:[0,R)\rightarrow \Lambda$ be a maximal flow line of $\Phi^k$. If $R<+\infty$, then there exists a sequence $r_h\rightarrow R$ such that
\begin{equation}\label{eq:infper}
T(r_h)\ \longrightarrow\ 0\quad \text{and} \quad e(x(r_h))\ = \ O(T(r_h)^2)\, .
\end{equation}
\end{proposition}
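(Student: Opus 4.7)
The plan is to argue first that the period $T$ must approach zero along some sequence $r\to R$ via a completeness argument, then to upgrade this to a sequence at which additionally $T'(r_h)\le 0$, and finally to convert this sign condition into the quadratic bound on $e(x_h)$ by evaluating the energy-direction component of $\eta_k$ through \eqref{par-T}.

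For the first point, I would use that $|X_k|\le 1$ by construction, so the maximal flow line $u:[0,R)\to\Lambda$ is $1$-Lipschitz with respect to $d_\Lambda$. Suppose for contradiction that $\liminf_{r\to R}T(r)=:T_->0$. Then $u$ takes values in $H^1(\T,M)\times[T_-,+\infty)$, on which $d_\Lambda$ is complete by the remarks in Subsection \ref{sec:pre}. A $1$-Lipschitz curve defined on a bounded interval of a complete metric space is Cauchy at its right endpoint, so $u(r)$ converges to some $(x_*,T_*)\in\Lambda$ as $r\to R$. Since $X_k$ is locally Lipschitz by Corollary \ref{cor:reg}, local existence at $(x_*,T_*)$ then extends $u$ past $R$, contradicting maximality. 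Hence there exists a sequence $s_h\to R$ with $T(s_h)\to 0$.

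For the second point, fix any strictly decreasing $\tau_h\to 0$ with $\tau_1<T(0)$ and set
\[r_h\ :=\ \inf\big\{r\in[0,R)\ :\ T(r)\le\tau_h\big\}\, .\]
By continuity $T(r_h)=\tau_h$ and $T(r)>\tau_h$ on $[0,r_h)$, forcing $T'(r_h)\le 0$; moreover $r_h\to R$, for otherwise $T$ would vanish at some point of $[0,R)$. From $X_k=-\sharp\eta_k/\sqrt{1+|\eta_k|^2}$ and $\sharp dT=\frac{\partial}{\partial T}$, one computes $T'=-\eta_k\big[\frac{\partial}{\partial T}\big]\big/\sqrt{1+|\eta_k|^2}$, so the sign condition translates to $\eta_k\big[\frac{\partial}{\partial T}\big](r_h)\ge 0$. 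Using the identity $e(x)=T\,e(\gamma)$ from Subsection \ref{sec:pre}, the quadratic coercivity \eqref{disuguaglianzeE} yields
\[\frac{1}{T}\int_0^T E(\gamma,\gamma')\,ds\ \ge\ \frac{E_0\,e(x)}{T^2}\,-\,E_1\, ,\]
so combining with \eqref{par-T} gives $0\le k+E_1-E_0\,e(x(r_h))/T(r_h)^2$, i.e.\ $e(x(r_h))\le\frac{k+E_1}{E_0}T(r_h)^2$, which is precisely \eqref{eq:infper}.

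I do not anticipate a substantial obstacle. The only subtle idea is to choose $r_h$ as the \emph{first} time $T$ drops to $\tau_h$: this is exactly what forces $T'(r_h)\le 0$ and thereby the correct sign of the ``energy defect'' $\eta_k\big[\frac{\partial}{\partial T}\big](r_h)$, which then triggers the quadratic lower bound on $E$. The completeness step and the final computation are otherwise routine.
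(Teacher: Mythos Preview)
Your proposal is correct and follows essentially the same approach as the paper: a completeness argument to force $\liminf_{r\to R}T(r)=0$, extraction of a sequence with $T'(r_h)\le 0$, and the energy bound from \eqref{par-T} combined with \eqref{disuguaglianzeE}. Your first-hitting-time construction of $r_h$ is a slightly more explicit version of what the paper leaves implicit, and you correctly keep the normalization factor $\sqrt{1+|\eta_k|^2}$ in the expression for $T'$ (the paper drops it, which is harmless for the sign argument).
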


\begin{proof}
First, arguing by contradiction we prove that
\begin{equation}\label{eq:liminf}
\liminf_{r\rightarrow R}\,T(r)\ =\ 0\,.
\end{equation}
Thus, we assume that $T(r)\geq T_*>0$ for every $r\in[0,R)$ and observe that
\begin{equation*}
\left|\frac{du}{dr}\right|\ =\ \left|\frac{-\sharp\eta_k}{\sqrt{1+|\eta_k|^2}}\right|\ =\ \frac{|\eta_k|}{\sqrt{1+|\eta_k|^2}}\ <\ 1\,.
\end{equation*}
Since $H^1(\T,M)\times[T_*,+\infty)$ is complete and the derivative of $u$ is bounded by the above inequality, there exists the limit 
\[u_*\ := \ \lim_{r\rightarrow R}u(r)\, .\]

\noindent As $X_k$ is locally uniformly Lipschitz, there exists a neighbourhood $\mathcal U$ of $u_*$, such that the solutions to the Cauchy problem with initial data in $\mathcal U$ all exist in a small fixed interval $[0,r_{u_*}]$. This yields a contradiction as soon as $u(r)\in\mathcal U$ and $R-r<r_{u_*}$. Suppose now that \eqref{eq:liminf} holds. In this case there is a sequence $r_h\rightarrow R$ such that 
\[T(r_h)\ \longrightarrow\ 0 \quad \text{and} \quad \frac{dT}{dr}(r_h)\ \leq \ 0\, .\]

\noindent Using \eqref{disuguaglianzeE}, we find
\begin{align*}
0\ \geq\ \frac{dT}{dr}(r_h) \ =\  -\,(\eta_k)_{u(r_h)}\left[\frac{\partial }{\partial T}\right]\ &=\ \int_0^1 E\Big(x(r_h),\frac{\dot x(r_h)}{T}\Big)\, dt\ -\ k \\ 
                                             &\geq \ E_0\, \frac {e(x(r_h))}{T(r_h)^2}\ -\ E_1\ -\ k\,,
\end{align*}

\noindent which gives the required bound for the energy. 
\end{proof}

The previous proposition shows that the only source of non-completeness of the semi-flow are trajectories that go closer and closer to the subset of constant loops. We have encountered a similar phenomenon in Theorem \ref{theorem:ps}(1), where we saw that a possible class of vanishing sequences without limit points are those with infinitesimal length. It is therefore necessary to take a better look to the behaviour of $\eta_k$ on short loops; we will do this in the next section. 

Before moving on to this task, we end this subsection by generalizing to our setting one of the standard estimates for gradient flows (see for example \cite[Lemma 6.9]{Con06}), namely the bound of the $1/2$-H\"older norm of a flow-line in terms of the action variation and the length of the interval. This result will be used in the proof of Proposition \ref{Struwe} in order to show that the period bounds obtained by the Struwe's monotonicity argument are preserved by the semi-flow. 

\begin{lemma}\label{lem:ac-distper}
If $u:[0,R]\rightarrow\Lambda$ is a flow line of $\Phi^{k}$, then
\begin{equation}
d_\Lambda(u(R),u(0))^2\ \leq\ R\cdot\big(-\Delta S_k(u)(R)\big)\,.
\end{equation}
In particular,
\begin{equation}\label{eq:ac-per}
|T(R)-T(0)|^2\ \leq \ R\cdot\big(-\Delta S_k(u)(R)\big)\,.
\end{equation}
\end{lemma}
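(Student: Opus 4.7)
The plan is first to unpack both sides of the inequality as integrals along the flow line. By definition of $\Phi^k$, at the point $u(r)$ we have
$$\dot u(r)\ =\ \frac{-\,\sharp\eta_k}{\sqrt{1+|\eta_k|^2}}\,.$$
Setting $f(r):=|\eta_k|_{u(r)}$, we get $|\dot u(r)|=f/\sqrt{1+f^2}$, and pairing $\eta_k$ with $\dot u$ yields $u^*\eta_k = -\,f^2/\sqrt{1+f^2}\,dr$. Hence, using \eqref{variazionelungou},
$$-\,\Delta S_k(u)(R)\ =\ \int_0^R \frac{f(r)^2}{\sqrt{1+f(r)^2}}\, dr\,,\qquad \ell(u)\ :=\ \int_0^R |\dot u(r)|\, dr\ =\ \int_0^R \frac{f(r)}{\sqrt{1+f(r)^2}}\, dr\,.$$

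Second, I would apply the Cauchy--Schwarz inequality with the weighting
$$\frac{f}{\sqrt{1+f^2}}\ =\ \left(\frac{f^2}{\sqrt{1+f^2}}\right)^{1/2}\cdot\left(\frac{1}{\sqrt{1+f^2}}\right)^{1/2}\,,$$
which gives
$$\ell(u)^2\ \leq\ \left(\int_0^R \frac{f^2}{\sqrt{1+f^2}}\, dr\right)\cdot\left(\int_0^R \frac{1}{\sqrt{1+f^2}}\, dr\right)\ \leq\ \big(-\Delta S_k(u)(R)\big)\cdot R\,,$$
since $(1+f^2)^{-1/2}\leq 1$.

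Third, I would conclude the first inequality from the standard fact that $d_\Lambda(u(R),u(0))\leq \ell(u)$: the path $u$ itself connects the two endpoints in $\Lambda$, and even though $\Lambda$ is not metrically complete, the geodesic distance is bounded above by the length of any specific connecting curve. For the second, more specific inequality, I would use that the product structure $g_\Lambda = g_{H^1}+dT^2$ forces $|dT/dr|\leq |\dot u|$, so that
$$|T(R)-T(0)|\ \leq\ \int_0^R \left|\frac{dT}{dr}\right|\, dr\ \leq\ \ell(u)\,,$$
and squaring gives \eqref{eq:ac-per}.

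The argument is essentially routine once the velocity of $u$ and the integrand of $u^*\eta_k$ are written out; the only genuinely substantive step is choosing the correct weighting in Cauchy--Schwarz so that the integral $\int_0^R f/\sqrt{1+f^2}\,dr$ gets controlled precisely by $\int_0^R f^2/\sqrt{1+f^2}\,dr$ rather than by the wrong quantity $\int_0^R f^2/(1+f^2)\,dr$.
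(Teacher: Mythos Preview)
Your proof is correct and follows essentially the same route as the paper: compute $-\eta_k[\dot u]$ and $|\dot u|$ along the flow line, then combine a pointwise inequality with Cauchy--Schwarz. The only cosmetic difference is the order of operations: the paper first uses the pointwise bound $-\eta_k[\dot u]=f^2/\sqrt{1+f^2}\geq f^2/(1+f^2)=|\dot u|^2$ and then applies the standard Cauchy--Schwarz $\big(\int_0^R|\dot u|\,dr\big)^2\leq R\int_0^R|\dot u|^2\,dr$, whereas you apply a weighted Cauchy--Schwarz first and then the pointwise bound $(1+f^2)^{-1/2}\leq 1$; the two arguments are equivalent.
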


\begin{proof}
We start from the definition of the action variation and compute
\begin{align*}
R\cdot\big(-\Delta S_k(u)(R)\big)\ &=\ R\, \int_0^R\!\!-\,\eta_k\left[\frac{du}{dr}\right]dr\ \geq\ R\, \left(\int_0^R\left|\frac{du}{dr}\right|^2\!\!dr\right) \\ 
                            &\geq\ \left(\int_0^R\left|\frac{du}{dr}\right|dr\right)^2\ \geq\ d_{\Lambda}(u(R),u(0))^2\,, 
\end{align*}

\noindent where in the last but one inequality we used the Cauchy-Schwarz inequality. To obtain \eqref{eq:ac-per} we just observe that 
\[|T(R)-T(0)|\ \leq \ d_{\Lambda}(u(R),u(0))\]
as $d_{\Lambda}$ is a product distance. 
\end{proof}


\section{The geometry of the action 1-form on the subset of short loops}\label{sec:con}

We now study the action form on the subset of short loops. The main goal of this section will be to show that $\eta_k$ admits a primitive in a neighbourhood of the constant loops and that such a primitive has an interesting geometry when $k>e_0(H)$.

Thus, let 
\[M_+\ :=\ M\times (0,+\infty)\ \subset\ \Lambda_0\quad\mbox{and}\quad M_T\ :=\ M\times\{T\}\ \subset\ M_+\]
denote the set of constant loops with arbitrary period and the set of constant loops with fixed period $T$, respectively. For every $\delta>0$ consider the neighbourhood $\mathcal V_\delta$ of $M_+$ in $\Lambda_0$ given by 
the loops with length less than $\delta$
\begin{equation}
\mathcal V_\delta\ :=\ \Big\{(x,T)\in \Lambda_0 \ \Big |\ l(x)<\delta\Big\}\,.
\end{equation}
It is well-known that, if $\delta$ is sufficiently small, then there is a deformation retraction of $\mathcal V_\delta$ onto $M_+$ which fixes the period. This yields a map $D_x:B^2\rightarrow \mathcal V_\delta$ such that $D_x|_{\partial B^2}=x$. We use the capping disc $D_x$ to define a primitive $S_k$ for $\eta_k$ on $\mathcal V_\delta$:
\begin{equation}
S_k(x,T)\,  :=\ S^L_k(x,T)\ +\ \int_{B^2}D_x^*\,\sigma\,.
\label{primitiveonvdelta}
\end{equation}
The integral of $\sigma$ over the capping disc can be easily estimated. Indeed, Lemma 7.1 in \cite{Abb13} states that, if $\delta >0$ is sufficiently small, then there exists a constant $\Theta_0>0$ such that, for every $(x,T)\in\mathcal V_\delta$,
\begin{equation}\label{eq:area}
\left|\int_{B^2}D_x^*\,\sigma\right|\ \leq\ \Theta_0\, l(x)^2\, .
\end{equation}

For the proof of the main theorem, we will need bounds on the local primitive $S_k$ which are uniform for $k$ lying in a compact range of energies. Therefore, we assume for the rest of this section that $k$ belongs to $I$, a fixed open interval with compact closure in $(e_0(H),+\infty)$. 

We start by estimating the function $S_k$ from above.

\begin{lemma}
If $\delta >0$ is small enough, then there exists $B>0$ such that, $\forall\,k\in I$,
\begin{equation}\label{eq:estab}
S_k(x,T)\ \leq\ B\cdot\left(\frac{e(x)}{T}\, +\, T\, +\, l(x)^2\right)\,, \ \ \ \ \forall \ (x,T) \in \mathcal V_\delta\, .
\end{equation}
\end{lemma}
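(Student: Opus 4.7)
The plan is to bound each of the three pieces making up $S_k(x,T) = S_k^L(x,T) + \int_{B^2} D_x^*\sigma$ separately by one of the three terms appearing on the right-hand side of \eqref{eq:estab}.

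First I would establish a global upper bound of the form $L(q,v) \leq L'_1|v|_q^2 + L'_2$ on all of $TM$. This follows from the hypothesis that $L$ is quadratic at infinity: outside a compact subset of $TM$ the Lagrangian equals $\tfrac12 |v|_q^2 - V(q)$, while on any compact subset $L$ is bounded by continuity. Plugging this pointwise estimate into the definition of $S_k^L$ and using $|\dot x/T|^2 = |\dot x|^2/T^2$ gives
\begin{equation*}
S_k^L(x,T) \;=\; T\int_0^1 \Bigl[L\bigl(x,\tfrac{\dot x}{T}\bigr) + k\Bigr]\,dt \;\leq\; L'_1\,\frac{e(x)}{T} \;+\; (L'_2 + k)\,T.
\end{equation*}
Since $k$ ranges in the bounded interval $I$, the factor $L'_2 + k$ is bounded by a constant depending only on $I$.

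Second, the capping-disc term is controlled directly by \eqref{eq:area}: for $\delta$ small enough,
\begin{equation*}
\int_{B^2} D_x^*\sigma \;\leq\; \Bigl|\int_{B^2} D_x^*\sigma\Bigr| \;\leq\; \Theta_0\, l(x)^2.
\end{equation*}

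Finally I would take $B$ to be the maximum of $L'_1$, $L'_2 + \sup I$, and $\Theta_0$; summing the two displayed inequalities yields the desired estimate uniformly in $k \in I$ and $(x,T) \in \mathcal V_\delta$. There is no real obstacle here once the global quadratic upper bound on $L$ is written down; the only point requiring mild care is ensuring the constants do not blow up as $k$ varies, which is handled by restricting to the compact closure of $I$.
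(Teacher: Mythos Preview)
Your proof is correct and follows essentially the same approach as the paper: the paper also uses the global quadratic bound $L(q,v)\leq B_0(1+|v|_q^2)$ coming from $L$ being quadratic at infinity, together with the area estimate \eqref{eq:area}, and then takes $B$ to be the maximum of the resulting constants.
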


\begin{proof}
Since $L$ is quadratic at infinity we can find $B_0>0$ such that 
\[L(q,v)\ \leq\ B_0(1+|v|^2_q)\, , \ \ \ \ \forall \ (q,v)\in TM\]

\noindent and we readily compute using \eqref{eq:area}
\begin{align*}
S_k(x,T)\ &=\ T\int_0^1\Big[L\Big (x(t),\frac{\dot x(t)}{T}\Big )+k\Big]\, dt\ +\ \int_{B^2}D_x^*\,\sigma\\
&\leq\ T\int_0^1\Big[B_0\, \frac{|\dot x(t)|^2}{T^2}+B_0+k\Big]\, dt\ +\ \Theta_0\,l(x)^2\\
&\leq\  B_0\, \frac{e(x)}{T}\, +\, (B_0+\sup I)T\, +\, \Theta_0\, l(x)^2\,.
\end{align*}
\end{proof}

\noindent The value of $S_k$ at $(x,T)\in M_{T}$ can be computed exactly:
\begin{equation}\label{eq:skonm}
S_k(x,T)\ =\ T\big [L(x,0)+k\big ]\ =\ T\big [k-E(x,0)\big ]\,.
\end{equation}

In particular, we can choose $T$ small enough so that the function $S_k|_{M_T}$ is arbitrarily close to zero, uniformly in $k\in I$. On the other hand, the next lemma shows that $S_k$ is positive on $\mathcal V_\delta$ and has a positive lower bound on $\partial \mathcal V_\delta$. Here the hypothesis that $I$ has compact closure in $(e_0(H),+\infty)$ plays a crucial role. 

\begin{lemma}\label{lem:low}
There exist $\delta=\delta(I)>0$ and $\varepsilon=\varepsilon(I)>0$ such that for all $k\in I$ 
\begin{equation}\label{eq:mp}
\inf_{\partial \mathcal V_{\delta}}S_k\ \geq\ \varepsilon\, , \quad\mbox{and} \ \ \ \inf_{\mathcal V_{\delta}}S_k\ =\ 0\,. 
\end{equation}
\end{lemma}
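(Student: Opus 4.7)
The strategy is to establish the uniform-in-$k$ lower bound $S_k(x,T)\geq (\rho_I/2)\,l(x)$ on $\mathcal V_\delta$, for a sufficiently small $\delta=\delta(I)>0$ and a positive constant $\rho_I=\rho_I(I)$. Both assertions of the lemma then follow at once: on $\partial\mathcal V_\delta$ one has $l(x)=\delta$, whence $S_k\geq\varepsilon:=\rho_I\delta/2$; on $\mathcal V_\delta$ itself $S_k\geq 0$, and the value $0$ is attained in the limit along sequences of constant loops with $T\to 0^+$, in view of \eqref{eq:skonm}.

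\emph{Step 1 (Fenchel--Maupertuis inequality).} Introduce the smooth $1$-form $\mathfrak{p}_0\in\Omega^1(M)$ given by $\mathfrak{p}_0(q):=\mathcal L^{-1}(q,0)$, i.e.\ the momentum dual to zero velocity. Since $H(q,\mathfrak{p}_0(q))=E(q,0)\leq e_0(H)<\inf I$ and $H$ is smooth and strictly fibrewise convex, a compactness argument using the Hessian of $H$ in the fibres yields a uniform radius $\rho_I>0$ such that, for every $q\in M$ and $k\in I$, the ball in $T^*_qM$ of radius $\rho_I$ centred at $\mathfrak{p}_0(q)$ (with respect to the norm dual to $|\cdot|_q$) is contained in $\{p:H(q,p)\leq k\}$. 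Applying the Fenchel--Legendre inequality $L(q,v)+H(q,p)\geq \langle p,v\rangle$ for every such $p$ and taking the supremum gives
\begin{equation*}
L(q,v)+k\ \geq\ \langle\mathfrak{p}_0(q),v\rangle\ +\ \rho_I\,|v|_q,\qquad\forall\,(q,v)\in TM,\ k\in I.
\end{equation*}

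\emph{Step 2 (Capping-disc estimate).} Integrating the inequality along $\gamma=(x,T)$,
\begin{equation*}
S^L_k(x,T)\ \geq\ \rho_I\,l(x)\ +\ \int_x\mathfrak{p}_0.
\end{equation*}
Since $x$ is contractible and short, Stokes' theorem yields $\int_x\mathfrak{p}_0=\int_{B^2}D_x^*(d\mathfrak{p}_0)$. The form $d\mathfrak{p}_0$ is smooth and bounded on the compact manifold $M$, so the argument proving \eqref{eq:area} in Lemma~7.1 of \cite{Abb13} applies mutatis mutandis (it uses only $L^\infty$-bounds of the $2$-form involved) to yield a constant $\Theta_1>0$ depending on $d\mathfrak{p}_0$ such that
\begin{equation*}
\Bigl|\int_{B^2}D_x^*(d\mathfrak{p}_0)\Bigr|\ \leq\ \Theta_1\,l(x)^2,\qquad\forall\,(x,T)\in\mathcal V_\delta.
\end{equation*}
Combined with \eqref{eq:area},
\begin{equation*}
S_k(x,T)\ =\ S^L_k(x,T)\, +\, \int_{B^2}D_x^*\sigma\ \geq\ \rho_I\,l(x)\,-\,(\Theta_0+\Theta_1)\,l(x)^2.
\end{equation*}
Shrinking $\delta$ so that $(\Theta_0+\Theta_1)\delta\leq\rho_I/2$, the bound $S_k\geq (\rho_I/2)\,l(x)$ holds throughout $\mathcal V_\delta$, proving the lemma.

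\emph{Main obstacle.} The delicate point is the uniform Fenchel--Maupertuis inequality of Step 1. The naive Jacobi support function $J_k(q,v):=\sup_{H(q,\cdot)\leq k}\langle p,v\rangle$ is a genuine Finsler norm in $v$ only when the zero momentum lies in $\{p:H(q,p)\leq k\}$; this may fail for $k$ barely above $e_0(H)$ at points where $H(q,0)>e_0(H)$. Re-centring the support function about $\mathfrak{p}_0(q)$ restores positivity uniformly in $k\in I$, at the cost of the extra linear term $\langle\mathfrak{p}_0(q),v\rangle$ whose line integral along $x$ is nonetheless controlled, via Stokes, by the same type of area estimate governing $\sigma$, thereby producing only a subleading $O(l(x)^2)$ correction absorbed by the dominant $\rho_I\,l(x)$ on sufficiently short loops.
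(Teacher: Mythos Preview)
Your argument is correct and follows essentially the same route as the paper: both establish an inequality of the form $S_k(x,T)\geq c\,l(x)-C\,l(x)^2$ on $\mathcal V_\delta$ and read off the two assertions from it together with \eqref{eq:skonm}. The only difference is in how the linear coefficient is produced---you obtain $c=\rho_I$ from the Fenchel inequality re-centred at $\mathfrak p_0(q)=d_vL(q,0)$, whereas the paper imports the same shape of estimate with $c=\sqrt{L_2(k-e_0(H))}$ from \cite[Lemma~7.2]{Abb13}---and in both cases the $O(l(x)^2)$ corrections (from $\int_x\mathfrak p_0$ and from $\int_{B^2}D_x^*\sigma$) are absorbed via the capping-disc area bound \eqref{eq:area}.
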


\begin{proof}
The last inequality in the proof of Lemma 7.2 in \cite{Abb13} implies that, for $\delta$ sufficiently small, every $(x,T)\in\mathcal V_\delta$ satisfies
\begin{equation*}
S^L_k(x,T)\ \geq\ l(x)\sqrt{L_2(k-e_0(H))}\ -\ \Theta_1l(x)^2\,,
\end{equation*}

\noindent for suitable $L_2,\Theta_1>0$. Putting together this estimate with \eqref{eq:area},we get
\begin{equation*}
S_k(x,T)\ \geq\ l(x)\sqrt{L_2(k-e_0(H))}\ -\ (\Theta_0+\Theta_1)l(x)^2,
\end{equation*}

\noindent which is positive and bounded away from zero if $l(x)$ is small and positive and $k\in I$. Combining this information with \eqref{eq:skonm}, the thesis follows.
\end{proof}

\noindent In view of this result we define for every $k\in I$ the sublevel sets 
\[\mathcal W'_k\ :=\ \Big \{S_k<\frac{\varepsilon}{4}\Big \}\,\cap\,\mathcal V_{\delta} , \ \  \ \ \mathcal W_k\ :=\ \Big \{S_k<\frac{\varepsilon}{2}\Big \}\,\cap\,\mathcal V_{\delta} .\]
The relevance of these sets to our discussion is demonstrated in the next proposition.
\begin{proposition}\label{lem:wdelta}
There exists a positive real number $T=T(I)$ such that
\begin{equation}\label{eq:wd}
M_{T}\ \subset\ \mathcal W'_{k}\ \subset\ \overline{\mathcal W}_{k}\ \subset\ \mathcal V_{\delta}\,,\quad\forall\, k\in I\,.
\end{equation}
Moreover, the following two statements hold for every $k\in I$:
\begin{enumerate}
 \item If $(\gamma_h)\subset\Lambda_0$ is a vanishing sequence for $\eta_k$ such that $T_h\rightarrow 0$, then $\gamma_h\in \mathcal W'_{k}$, for $h$ large enough.
 \item If a flow line of $\Phi^k$ is not defined for all positive times, then it enters $\mathcal W'_{k}$.
\end{enumerate}
\end{proposition}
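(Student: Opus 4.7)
The plan is to deduce the three claims from the action estimates of Lemmas~3.1 and~3.2 together with the compactness information provided by Theorem~\ref{theorem:ps}(1) and Proposition~\ref{pro:comp}. The middle inclusion $\mathcal{W}'_k\subset\overline{\mathcal{W}}_k$ in \eqref{eq:wd} is trivial since $\varepsilon/4<\varepsilon/2$, so only the outer two need work. For the first inclusion I will simply look at the explicit formula \eqref{eq:skonm}: on any constant loop $(q,T)\in M_T$ we have $l(q)=0<\delta$ and $S_k(q,T)=T\bigl(k-E(q,0)\bigr)$. As $k$ ranges over $\overline I$, which is compact, and as $E(\cdot,0)$ is bounded on $M$, I can choose $T=T(I)>0$ small enough that $|S_k(q,T)|<\varepsilon/4$ uniformly in $(q,k)\in M\times I$, giving $M_T\subset \mathcal{W}'_k$. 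For the last inclusion I will use Lemma~\ref{lem:low}, which asserts $S_k\geq \varepsilon$ on $\partial\mathcal{V}_\delta$: having chosen the retraction on a slightly larger neighbourhood $\mathcal{V}_{\delta'}$ (with $\delta'>\delta$) so that $S_k$ is continuous on a neighbourhood of $\overline{\mathcal{V}_\delta}$, any convergent sequence in $\mathcal{W}_k$ whose limit lay on $\partial\mathcal{V}_\delta$ would satisfy both $S_k\leq \varepsilon/2$ and $S_k\geq \varepsilon$ at the limit, a contradiction.

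For statement~(1), let $\gamma_h=(x_h,T_h)$ be a vanishing sequence with $T_h\to 0$. Theorem~\ref{theorem:ps}(1) yields $e(x_h)=O(T_h^2)$, and by Cauchy--Schwarz $l(x_h)^2\leq e(x_h)=O(T_h^2)$, so $l(x_h)=O(T_h)\to 0$. Hence $\gamma_h\in\mathcal{V}_\delta$ for $h$ large. Plugging these estimates into the upper bound \eqref{eq:estab} of Lemma~3.1 gives
\begin{equation*}
S_k(\gamma_h)\ \leq\ B\left(\frac{e(x_h)}{T_h}\, +\, T_h\, +\, l(x_h)^2\right)\ =\ O(T_h)\,,
\end{equation*}
which is eventually smaller than $\varepsilon/4$, so $\gamma_h\in\mathcal{W}'_k$ for large $h$.

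Statement~(2) is proved by the same mechanism, replacing Theorem~\ref{theorem:ps}(1) by Proposition~\ref{pro:comp}. If $u:[0,R)\to\Lambda$ is a maximal flow line of $\Phi^k$ with $R<+\infty$, Proposition~\ref{pro:comp} furnishes a sequence $r_h\to R$ with $T(r_h)\to 0$ and $e(x(r_h))=O(T(r_h)^2)$. The argument of the preceding paragraph, applied verbatim to the points $u(r_h)=(x(r_h),T(r_h))$ in place of $\gamma_h$, shows that $u(r_h)\in\mathcal{W}'_k$ for $h$ large, so the flow line enters $\mathcal{W}'_k$.

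The only place where a genuine subtlety arises is the closure relation $\overline{\mathcal{W}}_k\subset\mathcal{V}_\delta$, which relies on $S_k$ extending continuously across $\partial\mathcal{V}_\delta$; this is handled by fixing $\delta$ strictly smaller than the radius on which the deformation retraction used to define $S_k$ is available. All other steps are straightforward applications of the estimates already collected, and the uniformity in $k$ is guaranteed because $I$ has compact closure in $(e_0(H),+\infty)$, so the constants $B$, $\varepsilon$ and $\delta$ produced by Lemmas~\ref{lem:low} and the upper bound \eqref{eq:estab} can be chosen independently of $k\in I$.
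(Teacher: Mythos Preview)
Your proof is correct and follows essentially the same route as the paper: the inclusions in \eqref{eq:wd} come from \eqref{eq:skonm} and Lemma~\ref{lem:low}, statement~(1) from the energy--period bound \eqref{en-per} fed into \eqref{eq:estab}, and statement~(2) from Proposition~\ref{pro:comp} combined with the argument for~(1). The paper is terser on $\overline{\mathcal W}_k\subset\mathcal V_\delta$, simply citing Lemma~\ref{lem:low}; your remark that one should fix $\delta$ strictly inside the radius where the retraction and hence $S_k$ are defined, so that $S_k$ extends continuously to $\partial\mathcal V_\delta$, is a legitimate clarification rather than a different argument.
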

\begin{proof}
The inclusions in \eqref{eq:wd} follow at once from Lemma \ref{lem:low} and Equation \eqref{eq:skonm}. We now prove (1). By \eqref{en-per} there is $C>0$ such that $e(x_h)\leq C T_h^2$. Therefore, $(x_h,T_h)\in \mathcal V_{\delta}$ for $h$ big enough and we can use \eqref{eq:estab} to obtain
\begin{equation}
S_k(x_h,T_h)\ \leq\ B\cdot\left(\frac{CT_h^2}{T_h}\, +\, T_h\, +\, CT_h^2\right)\,,
\end{equation}
which goes to zero as $h$ goes to infinity.

We now prove (2). Let $(x,T)\in\Lambda_0\setminus\mathcal W'_{k}$ and let $[0,R_{(x,T)})$ be the maximal interval of definition of the flow line 
\[r\mapsto (x(r),T(r))\ = \ \Phi_r^k(x,T)\, .\]
Suppose that $R_{(x,T)}<+\infty$. By Proposition \ref{pro:comp}, there exists $r_h\rightarrow R$ such that 
\[T(r_h)\ \longrightarrow\ 0\ \ \ \ \text{and} \ \ \ \ e(x(r_h))\ \leq\ C T(r_h)^2\, .\]
for some $C>0$. As in (1) we conclude that $(x(r_h),T(r_h))\in \mathcal W'_{k}$ for large $h$.
\end{proof}

In view of this proposition, we can make the semi-flow $\Phi^{k}$ complete for every $k\in I$ by stopping the flow lines entering $\mathcal W'_{k}$. 
To this purpose we consider a smooth cut-off function $\kappa:(0,+\infty)\rightarrow[0,1]$ such that 
\[\kappa^{-1}(0)\ =\ \big (0,\varepsilon/4\big ]\, , \quad \,\kappa^{-1}(1)\ =\ \big [ \varepsilon/2,+\infty\big ) .\]
We use $\kappa$ to construct a function $\hat\kappa:\Lambda_0\rightarrow[0,1]$ as follows:
\[\hat\kappa\ = \begin{cases}
\ 1 & \text{on} \ \Lambda_0\setminus\mathcal V_{\delta}\,,\\
\ \kappa\circ S_k & \text{on} \ \mathcal V_{\delta}\,. 
\end{cases}
\]
Finally, we define the vector field $\hat X_{k}:=\hat\kappa\,  X_{k}$ and denote its semi-flow by $\hat \Phi^{k}$. We end this section by summarizing the properties of this semi-flow.

\begin{proposition}\label{lem:com}
The semi-flow $\hat \Phi^{k}$ is complete on $\Lambda_0$ for all $k\in I$. Moreover,
\[\hat \Phi^{k}\ = \begin{cases}
\ \Phi^k & \text{on} \ \Lambda_0\setminus\mathcal W_{k}\,,\\
\ \op{Id} & \text{on} \ \mathcal W'_{k}\,. 
\end{cases}
\]
\end{proposition}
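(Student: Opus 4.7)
The plan is to tackle the two assertions in turn, starting with the identification of $\hat\Phi^k$ with $\op{Id}$ on $\mathcal W'_k$ and with $\Phi^k$ on $\Lambda_0\setminus \mathcal W_k$. This part amounts to unpacking the definitions of $\hat\kappa$, $\mathcal W'_k$ and $\mathcal W_k$. On $\mathcal W'_k\subset \mathcal V_\delta$ we have $S_k\le\varepsilon/4$, hence $\hat\kappa=\kappa\circ S_k=0$, so $\hat X_k\equiv 0$ and $\hat\Phi^k=\op{Id}$. On $\Lambda_0\setminus\mathcal W_k$ a point either lies outside $\mathcal V_\delta$ (where $\hat\kappa\equiv 1$ by construction) or inside $\mathcal V_\delta$ with $S_k\ge\varepsilon/2$ (so that $\kappa\circ S_k=1$); in both cases $\hat X_k=X_k$ and the two semi-flows satisfy the same Cauchy problem on this set.

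For completeness I would argue by contradiction, in the spirit of the proof of Proposition \ref{pro:comp}. Let $u:[0,R)\to\Lambda_0$ be a maximal flow line of $\hat X_k$ with $R<+\infty$. First I observe that $u$ cannot enter $\mathcal W'_k$: if $u(r_0)\in\mathcal W'_k$ for some $r_0<R$, then $\hat X_k$ vanishes at $u(r_0)$, $u$ is constant on $[r_0,R)$, and extending it by the same constant beyond $R$ contradicts maximality. So $\hat\kappa(u(r))>0$ for every $r\in[0,R)$. Since $|\hat X_k|=\hat\kappa\,|X_k|\le |X_k|<1$, the trajectory is $1$-Lipschitz; if one had $T(r)\ge T_\ast>0$ on $[0,R)$, then $u$ would converge in the complete subspace $H^1(\T,M)\times[T_\ast,+\infty)$ to some point $u_\ast$, and local existence for $\hat X_k$ (which is locally uniformly Lipschitz since both $X_k$ and $\hat\kappa$ are) would extend $u$ past $R$. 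Hence $\liminf_{r\to R}T(r)=0$.

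The decisive step is then to pick a sequence $r_h\to R$ with $T(r_h)\to 0$ and $\tfrac{dT}{dr}(r_h)\le 0$, which can be done either along a monotonically decreasing stretch of $T$ near $R$ or by selecting local minima of $T(\cdot)$ accumulating at $R$. At such $r_h$,
\[\frac{dT}{dr}(r_h)\ =\ -\,\hat\kappa(u(r_h))\cdot\frac{(\eta_k)_{u(r_h)}[\partial/\partial T]}{\sqrt{1+|\eta_k|^2}}\ \le\ 0\,,\]
and since $\hat\kappa(u(r_h))>0$ this forces $(\eta_k)_{u(r_h)}[\partial/\partial T]\ge 0$. Combining this with \eqref{par-T} and \eqref{disuguaglianzeE}, one gets a constant $C>0$ independent of $h$ such that $e(x(r_h))\le C\, T(r_h)^2$, hence $l(x(r_h))\le\sqrt{C}\,T(r_h)\to 0$. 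For $h$ large this places $u(r_h)$ in $\mathcal V_\delta$, and \eqref{eq:estab} yields $S_k(u(r_h))\to 0$, so eventually $u(r_h)\in\mathcal W'_k$, contradicting the first step.

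The main obstacle I anticipate is precisely the extraction of the sequence $r_h$ with the correct sign of $dT/dr$ together with the careful use of $\hat\kappa(u(r_h))>0$ to transfer the period information onto the action form: this is what makes the cutoff harmless and lets the estimates proved for $X_k$ in Section \ref{sec:free} propagate to $\hat X_k$. Everything else is a straightforward adaptation of the arguments already developed for $\Phi^k$.
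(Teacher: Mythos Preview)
Your argument is correct and fills in details the paper leaves implicit: Proposition~\ref{lem:com} is stated there without proof, as a summary of the construction of $\hat\kappa$ together with Proposition~\ref{lem:wdelta}(2). The route the paper has in mind is slightly shorter than yours: away from $\{\hat\kappa=0\}$ the flow lines of $\hat X_k=\hat\kappa\,X_k$ are time-reparametrizations of those of $X_k$, so a non-complete $\hat\Phi^{k}$-trajectory that never meets the zero set of $\hat\kappa$ would yield a non-complete $\Phi^k$-trajectory, which by Proposition~\ref{lem:wdelta}(2) must enter $\mathcal W'_k\subset\{\hat\kappa=0\}$ --- a contradiction. You instead redo the estimates of Proposition~\ref{pro:comp} and Proposition~\ref{lem:wdelta} directly for $\hat X_k$, which is equally valid and more self-contained.

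One small point of wording: in your first step you should exclude the set $\{\hat\kappa=0\}=\{S_k\le\varepsilon/4\}\cap\mathcal V_\delta$ rather than the slightly smaller $\mathcal W'_k=\{S_k<\varepsilon/4\}\cap\mathcal V_\delta$, since it is positivity of $\hat\kappa$ along $u$ that you actually need in order to infer $(\eta_k)_{u(r_h)}[\partial/\partial T]\ge 0$ from $\tfrac{dT}{dr}(r_h)\le 0$; the argument you give (constant extension past $R$) applies verbatim to this larger set.
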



\section{Contractible periodic orbits on non-aspherical manifolds}\label{sec:gen}

In this section we give a proof of Theorem \ref{theoremb}. Thus, let $M$ be a closed non-aspherical manifold and $\sigma$ be a closed 2-form on $M$. As in the previous section, fix $I$ an open interval with compact closure in $(e_0(H),+\infty)$ 
and let $\delta=\delta(I)$, $\varepsilon=\varepsilon(I)$ as in the statement of Lemma \ref{lem:low}. Finally, let $T=T(I)$ be as in Proposition \ref{lem:wdelta}.

First, we are going to see that by Lemma \ref{lem:low} the $1$-form $\eta_k$ exhibits a minimax geometry on some class $\mathfrak U$ of continuous maps 
\[(B^{l-1},S^{l-2})\ \longrightarrow\ (\Lambda_0,M_{T})\,.\]

Then, by a generalization of the \textit{Struwe monotonicity argument}, the minimax geometry will yield the existence of zeros of $\eta_k$ on $\Lambda_0$ for almost all $k\in I$.
 
Let us start by constructing $\mathfrak U$. Since $M$ is not-aspherical, there exists a natural number $l\geq2$ such that $\pi_l(M)\neq0$. It is a well-known fact (see for instance \cite[Proposition 2.1.7]{Kli78}) that there exists a bijection
\begin{equation*}
\Big\{f:S^l\rightarrow M\Big\}\ \xrightarrow{\quad F\quad}\ \Big\{x:(B^{l-1},S^{l-2})\rightarrow (H^1(\T,M),M)\Big\}
\end{equation*}
which descends to a bijection between the sets of homotopy classes of continuous maps. Therefore we use a non zero-element $\mathfrak u\in \pi_l(M)$ to define 
$$\mathfrak U\ :=\ \Big\{\,u = (x,T):(B^{l-1},S^{l-2})\rightarrow (\Lambda_0,M_{T})\ \Big|\ F^{-1}(x)\in \mathfrak u\,\Big\}\, .$$

\begin{lemma}\label{lem:u}
The class $\mathfrak U$ enjoys the following two properties:
\begin{enumerate}
 \item It is invariant under $\hat \Phi^{k}$;
 \item Its elements intersect $\partial \mathcal V_{\delta}$: if $u\in\mathfrak U$, then $u(\zeta)\in \partial \mathcal V_{\delta}$ for some $\zeta \in B^{l-1}$.
\end{enumerate}
\end{lemma}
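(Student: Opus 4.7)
The plan is to establish the two claims separately, making essential use of two features of the modified semi-flow $\hat\Phi^k$ from Section \ref{sec:con}: its completeness on $\Lambda_0$ and the fact that it is the identity on $\mathcal{W}'_k$.

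For (1), I will first note that $\hat\Phi^k$ fixes $M_T$ pointwise. This follows by combining the inclusion $M_T \subset \mathcal{W}'_k$ from Proposition \ref{lem:wdelta} with the identity $\hat\Phi^k|_{\mathcal{W}'_k} = \op{Id}$ from Proposition \ref{lem:com}. Given $u = (x, T) \in \mathfrak{U}$ and $r \geq 0$, the composition $\hat\Phi^k_r \circ u$ is therefore well-defined and continues to send $S^{l-2}$ into $M_T$. To see that the homotopy class $\mathfrak{u}$ is preserved, I will use the homotopy $(s, \zeta) \mapsto \hat\Phi^k_{sr}(u(\zeta))$ for $s \in [0, 1]$, which is a continuous family of maps of pairs $(B^{l-1}, S^{l-2}) \to (\Lambda_0, M_T)$ connecting $u$ and $\hat\Phi^k_r \circ u$. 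Projecting via $\pi_{H^1}$ yields a homotopy of pairs in $(H^1(\T, M), M)$, so the $F^{-1}$-classes of the two projections agree.

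For (2), I will argue by contradiction, assuming $u(B^{l-1}) \cap \partial \mathcal{V}_\delta = \emptyset$. Since $B^{l-1}$ is connected, $u$ is continuous, and $u(S^{l-2}) \subset M_T \subset \mathcal{V}_\delta$, this assumption forces $u(B^{l-1}) \subset \mathcal{V}_\delta$. I then plan to invoke the deformation retraction $R \colon [0, 1] \times \mathcal{V}_\delta \to \mathcal{V}_\delta$ onto $M_+$, which preserves the period and hence fixes $M_+$ (and a fortiori $M_T$) pointwise. The map $(s, \zeta) \mapsto R_s(u(\zeta))$ is then a homotopy of pairs from $u$ to $R_1 \circ u$, with the latter taking values in $M_+$.

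The final and most delicate step is to conclude from this that $F^{-1}(x) = 0 \in \pi_l(M)$. Projecting to $H^1(\T, M)$, the map $\pi_{H^1}(R_1 \circ u) \colon B^{l-1} \to H^1(\T, M)$ factors through the subspace $M$ of constant loops, so its relative homotopy class in the pair $(H^1(\T, M), M)$ is trivial. Under the bijection $F$ (which, as stated in the excerpt, descends to a bijection of homotopy classes of maps), this translates into $F^{-1}(\pi_{H^1}(R_1 \circ u)) = 0 \in \pi_l(M)$. Homotopy invariance then gives $F^{-1}(x) = 0$, contradicting the defining condition $F^{-1}(x) \in \mathfrak{u} \neq 0$ for $u \in \mathfrak{U}$. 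The main conceptual point I expect to require care is precisely this translation, via $F$, between the geometric statement that the loop family $x$ can be deformed into the subspace of constant loops rel boundary and the algebraic triviality of the corresponding element of $\pi_l(M)$.
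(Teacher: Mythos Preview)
Your proposal is correct and follows essentially the same approach as the paper's own proof: for (1) you use the inclusion $M_T\subset\mathcal W'_k$ from Proposition~\ref{lem:wdelta} together with $\hat\Phi^k|_{\mathcal W'_k}=\op{Id}$ from Proposition~\ref{lem:com}, and for (2) you argue by contradiction via the deformation retraction of $\mathcal V_\delta$ onto the constant loops to force $F^{-1}(x)$ to be trivial. The only difference is that you spell out explicitly the homotopy $(s,\zeta)\mapsto\hat\Phi^k_{sr}(u(\zeta))$ in (1) and the translation through $F$ in (2), whereas the paper leaves these steps implicit.
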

\begin{proof}
The former property follows from Proposition \ref{lem:com} and by the inclusion $M_{T}\subset \mathcal W'_{k}$ proved in Proposition \ref{lem:wdelta}. To show the latter property we suppose by contradiction that there is a $u\in\mathfrak U$ with $u(B^{l-1})\subset \mathcal V_{\delta}$ (observe that, by construction of the class $\mathfrak U$, every element $u\in \mathfrak U$ intersects $\mathcal V_\delta$). Since $M_{T}$ is a deformation retract of $\mathcal V_{\delta}$, there exists $u_1\in\mathfrak U$ such that $x_1(B^{l-1})\subset M$. Thus, $F^{-1}(x_1)$ represents the trivial class. This contradiction shows that such a $u$ cannot exist.
\end{proof}

We are now ready to define the minimax function $c^{\mathfrak u}:I\rightarrow (0,+\infty)$. We start by constructing a primitive of $\eta_k$ along any $u\in \mathfrak U$. Let
$N\in S^{l-2}=\partial B^{l-1}$ be the north pole and define $S_k(u):B^{l-1}\rightarrow \R$ as the unique function such that
\begin{equation}\label{eq:primonu}
d\big(S_k(u)\big)\ =\ u^*\eta_k\,,\quad\ \ S_k(u)(N)\ =\ S_k(u(N))\,.
\end{equation}
Notice that this definition makes sense since $B^{l-1}$ is simply connected and $u(N)$ lies in the domain of definition of the function $S_k$ introduced in \eqref{primitiveonvdelta}. 
\begin{remark}
When $l>2$ choosing a point in $S^{l-2}$ different from $N$ in the definition of $S_k(u)$ would lead to the same primitive since $S^{l-2}$ is connected. On the other hand, if $l=2$, choosing the south pole would yield a primitive which differs from the one considered here by a constant equal to the integral of $\sigma$ over $\mathfrak u$. 
\end{remark}
Using Equation \eqref{eq:var}, the function $S_k(u)$ can also be described in terms of the action variation along paths defined in \eqref{variazionelungou}. Indeed, for any $\zeta\in B^{l-1}$ consider the segment $z_\zeta:[0,1]\rightarrow B^{l-1}$ joining $N$ with $\zeta$ and let $u_\zeta:=u\circ z_\zeta$. There holds 
\begin{equation}\label{eq:primvar2}
S_k(u)(\zeta)\ =\ \Delta S_k\big(u_\zeta\big)(1)\ +\ S_k(u(N))\,,\quad\forall\, \zeta\in B^{l-1}\,.
\end{equation}
We now define the minimax function $c^{\mathfrak u}:I\rightarrow (0,+\infty)$ by
\begin{equation}
c^{\mathfrak u}(k)\ := \ \inf_{u\in \mathfrak U} \ \max_{\zeta\in B^{l-1}} \ S_k(u)(\zeta)\, ,
\label{eq:minimax}
\end{equation} 
In the next lemma we prove a crucial monotonicity property for the function $c^{\mathfrak u}$.

\begin{lemma}\label{lem:mon}
If $k_1,k_2\in I$, then for every $u\in\mathfrak U$ we have
\begin{equation}\label{eq:difper}
S_{k_2}(u)\ =\ S_{k_1}(u)\ +\ (k_2-k_1)\,T\,.
\end{equation}
In particular, if $k_1<k_2$, then
\begin{equation}\label{eq:c}
c^\mathfrak u(k_1)\ \leq\ c^\mathfrak u(k_2)\,.
\end{equation}
\end{lemma}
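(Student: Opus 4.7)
The plan is to prove the first formula \eqref{eq:difper} by differentiating both sides and matching values at the basepoint $N$, and then deduce the monotonicity \eqref{eq:c} as an immediate pointwise consequence.

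First, I would observe that the $k$-dependence of $\eta_k$ is very explicit. Using the formulas \eqref{eq:dif1} and \eqref{par-T} for the components of $\eta_k$ in the splitting \eqref{eq:split}, the $H^1$-component is independent of $k$, while the $\partial/\partial T$-component picks up exactly $k_2-k_1$ when we pass from $k_1$ to $k_2$. Thus, denoting by $T:\Lambda\to(0,+\infty)$ the period projection, we have the identity of $1$-forms on $\Lambda$
\begin{equation*}
\eta_{k_2}\ -\ \eta_{k_1}\ =\ (k_2-k_1)\,dT\,.
\end{equation*}
Pulling back via $u$ and using the defining relation \eqref{eq:primonu} for $S_{k_i}(u)$ then yields
\begin{equation*}
d\bigl(S_{k_2}(u)\,-\,S_{k_1}(u)\bigr)\ =\ u^*(\eta_{k_2}-\eta_{k_1})\ =\ (k_2-k_1)\,d(T\circ u)\,,
\end{equation*}
so $S_{k_2}(u)-S_{k_1}(u)-(k_2-k_1)(T\circ u)$ is a constant on the connected set $B^{l-1}$.

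Next, I would evaluate this constant at the north pole $N$. By construction $u(N)\in M_T$, so in particular $u(N)\in\mathcal V_\delta$ and one can use the explicit formula \eqref{eq:skonm} for the restriction of the local primitive $S_k$ to $M_T$. Writing $u(N)=(x_0,T)$, this gives
\begin{equation*}
S_{k_2}(u(N))\,-\,S_{k_1}(u(N))\ =\ T\bigl[k_2-E(x_0,0)\bigr]\,-\,T\bigl[k_1-E(x_0,0)\bigr]\ =\ (k_2-k_1)\,T\,,
\end{equation*}
which matches $(k_2-k_1)(T\circ u)(N)$. Hence the constant is zero and \eqref{eq:difper} holds.

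Finally, since $T\circ u>0$ everywhere on $B^{l-1}$, the identity \eqref{eq:difper} gives $S_{k_2}(u)\geq S_{k_1}(u)$ pointwise whenever $k_1<k_2$. Taking the maximum over $\zeta\in B^{l-1}$ and the infimum over $u\in\mathfrak U$ in the definition \eqref{eq:minimax} yields \eqref{eq:c}. There is no genuine obstacle here: the only subtle point is making sure that the basepoint-evaluation step is consistent, i.e.\ that the primitive $S_k$ appearing on the right-hand side of \eqref{eq:primonu} is the same function on $\mathcal V_\delta$ used to compute $S_k(u(N))$ for both values of $k$, which is guaranteed because $u(N)\in M_T\subset\mathcal V_\delta$ by the definition of $\mathfrak U$ and \eqref{eq:wd}.
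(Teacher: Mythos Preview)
Your proof is correct and follows essentially the same approach as the paper: both compute $d(S_{k_2}(u)-S_{k_1}(u))=u^*(\eta_{k_2}-\eta_{k_1})=(k_2-k_1)\,dT$ and match values at $N$ via \eqref{eq:skonm}, then deduce the monotonicity of $c^{\mathfrak u}$ from the pointwise inequality $S_{k_1}(u)<S_{k_2}(u)$. The only cosmetic difference is the order of the two steps.
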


\begin{proof}
If $u\in\mathfrak U$, Equation \eqref{eq:skonm} implies that
\begin{equation*}
S_{k_2}(u(N))\ -\ S_{k_1}(u(N))\ =\ (k_2-k_1)\,T(N)
\end{equation*}
and, by the very definition of the action form in \eqref{etak}, we get
\begin{equation*}
d\big(S_{k_2}(u)-S_{k_1}(u)\big)\ =\ u^*\big(\eta_{k_2}-\eta_{k_1}\big)\ =\ (k_2-k_1)\,dT\,. 
\end{equation*}
Therefore, the function $S_{k_1}(u)+(k_2-k_1)T$ satisfies both conditions in \eqref{eq:primonu} with $k=k_2$. Since such conditions identify a unique function, Equation \eqref{eq:difper} follows.

If $k_1<k_2$ then \eqref{eq:difper} and the positivity of $T$ imply that $S_{k_1}(u)<S_{k_2}(u)$. Taking the inf-max of this inequality over $\mathfrak U$ yields \eqref{eq:c}.
\end{proof}

We use $c^{\mathfrak u}$ to construct a vanishing sequence for $\eta_k$ with periods bounded and bounded away from zero whenever $k$ is a Lipschitz-point for $c^{\mathfrak u}$. To exclude that the periods tend to zero we need the 
following lemma, which gives some information about the points which almost realize the maximum of $S_k(u)$.

\begin{lemma}\label{lem:notinwgeneral}
Fix $k\in I$ and $u\in\mathfrak U$. If $\zeta\in B^{l-1}$ is such that
\begin{equation}\label{ine:notinwgeneral}
S_k(u)(\zeta)\ \geq\ \max_{B^{l-1}}\ S_k(u)\ -\ \frac{\varepsilon}{2}\,,
\end{equation}
then $u(\zeta)\notin \mathcal W_{k}$.
\end{lemma}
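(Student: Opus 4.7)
The plan is to prove the contrapositive: assuming $u(\zeta)\in\mathcal W_{k}$, I will exhibit some $\zeta_1\in B^{l-1}$ with $S_k(u)(\zeta_1)>S_k(u)(\zeta)+\varepsilon/2$, which immediately forces $S_k(u)(\zeta)<\max_{B^{l-1}}S_k(u)-\varepsilon/2$ and contradicts the hypothesis. The two ingredients I want to combine are that $S_k(u(\zeta))<\varepsilon/2$ by the very definition of $\mathcal W_{k}$, and that every $u\in\mathfrak U$ reaches $\partial\mathcal V_{\delta}$ by Lemma \ref{lem:u}, where $S_k\geq\varepsilon$ thanks to Lemma \ref{lem:low}.

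To build $\zeta_1$, I would first pick $\zeta_*\in B^{l-1}$ with $u(\zeta_*)\in\partial\mathcal V_{\delta}$ and consider the straight segment $\alpha:[0,1]\to B^{l-1}$ joining $\zeta$ to $\zeta_*$. Since $\mathcal V_{\delta}$ is open and $u(\zeta)\in\mathcal W_{k}\subset\mathcal V_{\delta}$, the first exit time
\[t_1\,:=\,\inf\,\{\,t\in[0,1]\ :\ u(\alpha(t))\in\partial\mathcal V_{\delta}\,\}\]
is strictly positive; $u\circ\alpha$ takes values in $\mathcal V_{\delta}$ on $[0,t_1)$, and the point $\zeta_1:=\alpha(t_1)$ satisfies $u(\zeta_1)\in\partial\mathcal V_{\delta}$. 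The truncated path $u\circ\alpha|_{[0,t_1]}$ is therefore contained in $\overline{\mathcal V_{\delta}}$, where $S_k$ is a continuous primitive of $\eta_k$. Hence the fundamental theorem of calculus along this path yields
\[\Delta S_k\big(u\circ\alpha|_{[0,t_1]}\big)(1)\ =\ S_k(u(\zeta_1))\,-\,S_k(u(\zeta))\ >\ \varepsilon\,-\,\frac{\varepsilon}{2}\ =\ \frac{\varepsilon}{2}.\]

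To turn this into an inequality between values of $S_k(u)$, I would use that $B^{l-1}$ is simply connected and $u^*\eta_k$ is closed by Corollary \ref{cor:reg}, so the integral of $u^*\eta_k$ over any path in $B^{l-1}$ from $N$ to $\zeta_1$ has the same value. Decomposing such a path as the concatenation of the radial segment $z_\zeta$ from $N$ to $\zeta$ with $\alpha|_{[0,t_1]}$, formula \eqref{eq:primvar2} together with the additivity of action variations gives
\[S_k(u)(\zeta_1)\,-\,S_k(u)(\zeta)\ =\ \Delta S_k\big(u\circ\alpha|_{[0,t_1]}\big)(1)\ >\ \frac{\varepsilon}{2},\]
which is precisely what we needed. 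The argument is essentially forced once the right exit point $\zeta_1$ is identified, so the main (and rather mild) subtlety is to make sense of $S_k$ on $\partial\mathcal V_{\delta}$; this is already implicit in the statement of Lemma \ref{lem:low} and follows from choosing $\delta$ slightly below the radius on which the retraction defining $S_k$ is available, so that $S_k$ is defined and continuous on an open neighbourhood of $\overline{\mathcal V_{\delta}}$.
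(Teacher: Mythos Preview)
Your argument is correct and is essentially the paper's own proof: both assume $u(\zeta)\in\mathcal W_k$, use Lemma~\ref{lem:u} to find a point on $\partial\mathcal V_\delta$, restrict to the portion of the connecting segment lying in $\overline{\mathcal V_\delta}$, and compare $S_k(u)$ with the local primitive $S_k$ via \eqref{eq:var} to obtain the inequality $S_k(u)(\zeta_1)-S_k(u)(\zeta)>\varepsilon/2$. Your treatment of the first-exit time and of the boundary regularity of $S_k$ simply makes explicit what the paper abbreviates as ``without loss of generality''.
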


\begin{proof}
Suppose by contradiction that $u(\zeta)\in \mathcal W_{k}$. By Lemma \ref{lem:u} there exists $\zeta_1\in B^{l-1}$ such that $u(\zeta_1)\in\partial \mathcal V_{\delta}$.
Denote with $z_\zeta^{\zeta_1}:[0,1]\rightarrow B^{l-1}$ the segment joining $\zeta$ with $\zeta_1$ and let $u_\zeta^{\zeta_1}:=u\circ z_\zeta^{\zeta_1}$. Without loss of generality we may suppose $u_\zeta^{\zeta_1}([0,1))\subseteq \mathcal V_\delta$. Applying twice \eqref{eq:var} we get
\begin{align*}
S_k(u)(\zeta_1)\ -\ S_k(u)(\zeta)\ =\ \Delta S_k\big(u_\zeta^{\zeta_1}\big)(1)\ =\ S_k(u(\zeta_1))\ -\ S_k(u(\zeta))\ >\ \frac{\varepsilon}{2}\,,
\end{align*}
where for the inequality we used Lemma \ref{lem:low} and the fact that $u(\zeta)\in \mathcal W_{k}$. This readily implies that
\begin{equation*}
\max_{B^{l-1}}\ S_k(u)\ \geq\ S_k(u)(\zeta_1)\ >\ S_k(u)(\zeta)\ +\ \frac{\varepsilon}{2}\,,
\end{equation*}
in contradiction with the assumption.
\end{proof}

\begin{proposition}
Let $c^{\mathfrak u}:I\rightarrow (0,+\infty)$ as defined in \eqref{eq:minimax}. If $k\in I$ is a Lipschitz point for $c^{\mathfrak u}$, then there exists a critical sequence $(x_h,T_h)\in \Lambda_0$ with periods
bounded and bounded away from zero.
\label{Struwe}
\end{proposition}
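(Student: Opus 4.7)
The plan is to combine three ingredients: the translation formula \eqref{eq:difper} and the Lipschitz bound on $c^{\mathfrak u}$ to control the period at near-maximizing loops, the complete semi-flow $\hat\Phi^{k}$ (Proposition \ref{lem:com}) to produce a vanishing sequence, and Proposition \ref{lem:wdelta}(1) to rule out the period collapsing to zero.

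Let $L$ denote a Lipschitz constant of $c^{\mathfrak u}$ at $k$; by Lemma \ref{lem:mon} this function is also non-decreasing. First I would pick $\varepsilon_h\downarrow 0$ with $\varepsilon_h\leq 1$, and use the definition of the infimum to obtain $u_h\in\mathfrak U$ satisfying
\[
\max_{B^{l-1}}S_{k+\varepsilon_h}(u_h)\,\leq\,c^{\mathfrak u}(k+\varepsilon_h)+\varepsilon_h^{\,2}\,\leq\,c^{\mathfrak u}(k)+(L+1)\varepsilon_h.
\]
On the non-empty almost-maximum set $A_h:=\{\zeta\in B^{l-1}\,:\,S_k(u_h)(\zeta)\geq c^{\mathfrak u}(k)-\varepsilon_h\}$, the identity \eqref{eq:difper} gives
\[
\varepsilon_h\,T(u_h(\zeta))\,=\,S_{k+\varepsilon_h}(u_h)(\zeta)-S_k(u_h)(\zeta)\,\leq\,(L+2)\varepsilon_h,
\]
so $T(u_h(\zeta))\leq L+2$; moreover, for $h$ large Lemma \ref{lem:notinwgeneral} forces $u_h(\zeta)\notin\mathcal W_k$.

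Next I would deform by $v_h:=\hat\Phi^{k}_{r_h}\circ u_h$ with $r_h:=\sqrt{\varepsilon_h}$; this stays in $\mathfrak U$ by Lemma \ref{lem:u}(1). Applying Lemma \ref{lem:dec} to the flow-homotopy of the path from $N$ to $\zeta$---whose top edge is constant because $u_h(N)\in M_T\subset\mathcal W'_k$ is a fixed point of $\hat\Phi^{k}$---one obtains
\[
S_k(v_h)(\zeta)-S_k(u_h)(\zeta)\,=\,-\int_0^{r_h}\hat\kappa\,\frac{|\eta_k|^2}{\sqrt{1+|\eta_k|^2}}\bigl(\hat\Phi^{k}_s(u_h(\zeta))\bigr)\,ds\,\leq\,0.
\]
Any maximizer $\zeta_h$ of $S_k(v_h)$ therefore lies in $A_h$, and the flow line $w_h(s):=\hat\Phi^{k}_s(u_h(\zeta_h))$ starts outside $\mathcal W_k$ with $T(w_h(0))\leq L+2$ and total action drop at most $\max S_k(u_h)-c^{\mathfrak u}(k)\leq(L+1)\varepsilon_h$. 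Lemma \ref{lem:ac-distper} then bounds $|T(w_h(s))-T(w_h(0))|$ by $O(\varepsilon_h^{3/4})$, keeping $T(w_h(s))\leq L+3$ for all $s\in[0,r_h]$ and $h$ large. The mean value theorem applied to the above integral produces $s_h\in[0,r_h]$ with
\[
\hat\kappa(w_h(s_h))\,\frac{|\eta_k|^2_{w_h(s_h)}}{\sqrt{1+|\eta_k|^2_{w_h(s_h)}}}\,\leq\,(L+1)\sqrt{\varepsilon_h}.
\]

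To conclude $|\eta_k|_{w_h(s_h)}\to 0$ I would establish that $\hat\kappa\geq 1/2$ along $w_h([0,r_h])$ for $h$ large. This can be arranged once the cut-off $\kappa$ is chosen with $\kappa\geq 1/2$ on $[3\varepsilon/8,+\infty)$: the initial point $w_h(0)$ lies outside $\mathcal W_k$, and since the total action drop is $\leq(L+1)\varepsilon_h\ll\varepsilon$, the value of $S_k$ on the portions of $w_h$ inside $\mathcal V_\delta$ cannot descend below $3\varepsilon/8$---any re-entry of $w_h$ into $\mathcal V_\delta$ occurs at $S_k$-level close to $\varepsilon$ by the boundary bound of Lemma \ref{lem:low}. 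Hence $\gamma_h:=w_h(s_h)$ is a vanishing sequence with $T_h\leq L+3$. If some subsequence had $T_h\to 0$, Proposition \ref{lem:wdelta}(1) would eventually place $\gamma_h$ in $\mathcal W'_k$, contradicting $\hat\kappa(\gamma_h)\geq 1/2$; hence the periods are also bounded away from zero. The main obstacle I foresee is precisely this geometric bookkeeping around the transition region $\mathcal W_k\setminus\mathcal W'_k$; all remaining steps are a direct combination of the ingredients already established.
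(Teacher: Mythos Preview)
Your argument is correct and uses the same Struwe-monotonicity scheme as the paper: the Lipschitz bound combined with \eqref{eq:difper} controls the period at near-maximizers, and deforming by the truncated flow $\hat\Phi^{k}$ produces the vanishing sequence. The organizational differences are minor but worth noting. First, the paper flows for unit time and closes by contradiction (assuming $|\eta_k|\geq\sqrt{\varepsilon_*}$ on $\{T<A+3\}\setminus\mathcal W_k$), whereas you flow for time $\sqrt{\varepsilon_h}$ and extract the vanishing point directly via the mean value theorem; these are interchangeable. Second, and more usefully for you, the paper avoids your ``geometric bookkeeping around the transition region'' altogether: instead of tracking the local level $S_k$ along $w_h$ to force $\hat\kappa\geq 1/2$, it simply applies Lemma~\ref{lem:notinwgeneral} to the flowed map $\hat\Phi^{k}_r\circ u_h$ at \emph{every} $r$, which keeps the near-maximizing flow line entirely outside $\mathcal W_k$ (so $\hat\Phi^{k}=\Phi^{k}$ there) and simultaneously delivers the lower period bound without appealing to Proposition~\ref{lem:wdelta}(1). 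Your route works, but this observation would let you drop both the extra condition on $\kappa$ and the re-entry analysis.
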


\begin{proof} 
By assumption there is $A>0$ such that for every $k'\geq k$ close to $k$ 
\begin{equation}
c^{\mathfrak u}(k')\ -\ c^{\mathfrak u}(k)\ \leq\ A\, (k'-k)\,.  
\label{stimalipschitz}
\end{equation}
Consider a sequence $k_m\downarrow k$ and let $\lambda_m:=k_m-k\downarrow 0$. Clearly we may suppose \eqref{stimalipschitz} to hold for every $m\in\N$. Take a corresponding $u_m=(x_m,T_m)\in\mathfrak U$ with
\begin{equation*}
\max_{\zeta\in B^{l-1}}\ S_{k_m}(u_{m})(\zeta)\ <\  c^{\mathfrak u}(k_m)\ +\ \lambda_m\, ,
\end{equation*}
Since $S_k(u_m)<S_{k_m}(u_m)$ by Lemma \ref{lem:mon}, we have that for all $\zeta\in B^{l-1}$
\begin{equation}
S_k(u_{m})(\zeta) \ \leq \  S_{k_m}(u_{m})(\zeta) \ <\ c^{\mathfrak u}(k_m)\ +\ \lambda_m \ \leq \ \ c^{\mathfrak u} (k)\ +\ (A+1)\, \lambda_m\, .
\label{tm1}
\end{equation}
If $\zeta\in B^{l-1}$ satisfies $S_k(u_m)(\zeta)> c^{\mathfrak u}(k)-\lambda_m$, we can use \eqref{eq:difper} to bound the period 
\begin{equation}
T_{m}(\zeta) =  \frac{S_{k_m}(u_{m})(\zeta)-S_{k}(u_{m})(\zeta)}{\lambda_m}
\leq\frac{c^{\mathfrak u}(k_m)+\lambda_m-\big (c^{\mathfrak u}(k)-\lambda_m\big )}{\lambda_m}\leq A+2\,.\label{tm}
\end{equation}

For every $\zeta\in B^{l-1}$ let $v^\zeta_m:[0,1]\rightarrow\Lambda_0$ be the flow line given by $v_m^\zeta(r):=\hat\Phi^k_r(u_m(\zeta))$. For every $r\in[0,1]$ define the element $u_m^r\in\mathfrak U$ by
\begin{equation*}
u_{m}^r(\zeta):=\ v_m^\zeta(r)\ =\ \hat \Phi^{k}_r(u_{m}(\zeta))\, , \quad \forall \ \zeta\in B^{l-1}\, .
\end{equation*}
By Equation \eqref{eq:primvar2} and \eqref{eq:dec} we can write
\begin{equation}\label{eq:var2}
S_k(u_m^r)(\zeta) \ =\ S_k(u_m)(\zeta)\ +\ \Delta S_k(v_m^\zeta)(r)\,,\quad\forall\,r\in[0,1]\,.
\end{equation}
By definition of action variation and the fact that $v^\zeta_m $ is a flow line we see that 
\[r\ \longmapsto \ S_{k}(u^r_{m})(\zeta)\]
is a non-increasing function $\forall\,\zeta\in B^{l-1}$. Combining this fact with \eqref{tm1} we get that 
\begin{equation}\label{eq:max}
\max_{\zeta\in B^{l-1}}\ S_{k}(u^r_{m})(\zeta)\ <\ c^{\mathfrak u}(k)\ +\ (A+1)\,\lambda_m\,, \quad \forall\, r\in[0,1]
\end{equation}

\noindent and the following dichotomy holds: either,
\begin{align*}
\mbox{\itshape (a)}\ \ S_{k}(u_{m}^1)(\zeta)\ &\leq\  c^{\mathfrak u}(k)\,-\,\lambda_m\,,\ \ \mbox{or}\\
\mbox{\itshape (b)}\ \ S_{k}(u_{m}^r)(\zeta)\ &\in\ \Big(c^{\mathfrak u}(k)\,-\,\lambda_m,\, c^{\mathfrak u}(k)\,+\,(A+1)\lambda_m\Big)\,,\ \ \forall\,r\in[0,1]\,.
\end{align*}

\noindent Suppose $\zeta$ satisfies the second alternative and let us draw some consequences of this case. First, by \eqref{eq:var2} we find that
\begin{equation}\label{eq:var3}
\Delta S_k(v_m^\zeta)(r)\ >\ -\,(A+2)\,\lambda_m\,,\quad\forall\,r\in[0,1]\,. 
\end{equation}
Second, Equation \eqref{eq:max} implies that
\begin{equation*}
S_{k}(u_{m}^r)(\zeta)\ >\ c^{\mathfrak u}(k)\ -\ \lambda_m\ >\ \max_{B^{l-1}}\ S_{k}(u_{m}^r)\ -\ (A+2)\, \lambda_m\,.
\end{equation*}

\noindent By Lemma \ref{lem:notinwgeneral}, $u^r_{m}(\zeta)\notin \mathcal W_\delta$, provided that $(A+2) \lambda_m\leq \varepsilon/2$, which is true for $m$ big enough. This shows that $v^\zeta_m$ is a genuine flow line of the untruncated semi-flow $\Phi^k$. Finally, we use \eqref{tm} and a combination of \eqref{eq:ac-per} and \eqref{eq:var3} to get 
\[T^r_{m}(\zeta)\ \leq\ T_{m}(\zeta)\,+\,\big|T^r_{m}(\zeta)-T_{m}(\zeta)\big |\ \leq\ A+2\ +\ \sqrt{r(A+2)\lambda_m}\ <\ A+3\,,\]

\noindent where the last inequality is true for $m$ sufficiently large.

After this preparation, we claim that there exists a critical sequence $(x_h,T_h)$ contained in $\{T<A+3\}\setminus \mathcal W_\delta$. 
This assertion readily implies the proposition.

To prove the claim we argue by contradiction and suppose that such a critical sequence does not exist. Then we can find $\varepsilon_*>0$ such that on $\{T<A+3\}\setminus \mathcal W_\delta$
\[|\eta_k|\ \geq\ |X_k| \ \geq \ \sqrt{\varepsilon_*}\, .\]

\noindent If $\zeta\in B^{l-1}$ satisfies the alternative \textit{(b)} above, by \eqref{eq:var3} and the fact that $v_m^\zeta$ is a flow line of $\Phi^k$ contained in $\{T<A+3\}\setminus \mathcal W_\delta$, we have
\begin{equation*}
-\,(A+2)\,\lambda_m\ <\ \Delta S_k(v_{m}^\zeta)(1)\ =\ -\int_0^1\big | \eta_k(X_k)\big |^2\, dr\ \leq \ - \varepsilon_*\,,
\end{equation*}
Hence, the set of $\zeta\in B^{l-1}$ satisfying \textit{(b)} is empty as soon as $(A+2)\lambda_m<\varepsilon_*$.

Therefore, for $m$ big enough, all the $\zeta\in B^{l-1}$ satisfy \textit{(a)}. Since $u^1_m$ belongs to $\mathfrak U$, this contradicts the definition of $c^{\mathfrak u}(k)$ and finishes the proof.
\end{proof}

\begin{proof}[Proof of Theorem \ref{theoremb}] Combine the previous proposition with Theorem \ref{theorem:ps} and the fact that a monotone function is Lipschitz-continuous almost everywhere.
\end{proof}

In the particular case $M=S^2$, Theorem \ref{theoremb} together with the stability property for energy levels implies the following result about the existence of closed magnetic geodesics. Both statements below are actually 
already known (see, for instance, \cite[Theorem 3.13]{Gin96} and \cite[Assertion 3]{Gin87}, respectively).

\begin{corollary}\label{corollariosfera}
Let $(S^2,g)$ be a Riemannian 2-sphere and $\sigma\in \Omega^2(S^2)$. Then:
\begin{enumerate}
\item If $k$ is large enough, there is a closed magnetic geodesic with energy $k$.
\item Suppose $\sigma$ is symplectic. If $k>0$ is small enough, there exists a closed magnetic geodesic with energy $k$.
\end{enumerate}
\end{corollary}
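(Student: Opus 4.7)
The plan is to combine Theorem \ref{theoremb} with the stability principle recalled in the introduction (see \cite[Ch.~4, Theorem 5]{HZ94} and \cite[Corollary 8.4]{Abb13}): if $H^{-1}(k)$ is \emph{stable} in $(T^*M,\omega_\sigma)$ and the hypotheses of Theorem \ref{theoremb} hold, then a contractible periodic orbit exists on $H^{-1}(k)$ itself, not merely at almost every nearby energy. Since $\pi_2(S^2)\neq 0$ and $e_0(H_{\op{kin}})=0$, Theorem \ref{theoremb} applies to every $k>0$, so the task reduces to establishing stability of $H_{\op{kin}}^{-1}(k)$ in each of the two regimes.

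For part (1), I would exhibit an honest contact form on $H_{\op{kin}}^{-1}(k)$ when $k$ is large. The complement $T^*S^2\setminus 0_{S^2}$ of the zero section deformation retracts onto the unit cotangent bundle $S^*S^2\cong \R P^3$, so $H^2(T^*S^2\setminus 0_{S^2};\R)=0$. Hence the closed two-form $\pi^*\sigma$ admits a primitive $\beta$ outside the zero section, and $\lambda:=p\,dq+\beta$ satisfies $d\lambda=\omega_\sigma$. On $H_{\op{kin}}^{-1}(k)$ one computes $(p\,dq)(X_{H_{\op{kin}},\sigma})=|p|_q^2=2k$, which provides the dominant contribution. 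The fibre-dilation $\phi_\mu(q,p)=(q,\mu p)$ satisfies $\phi_\mu^*\omega_\sigma=\mu\,\omega_{\sigma/\mu}$ and, up to a time rescaling, conjugates the flow of $(H_{\op{kin}},\sigma)$ at energy $k$ with that of $(H_{\op{kin}},\sigma/\sqrt{2k})$ at energy $1/2$. This symmetry lets me choose $\beta$ so that $\beta(X_{H_{\op{kin}},\sigma})=O(\sqrt k)$ uniformly along the level. Thus $\lambda(X_{H_{\op{kin}},\sigma})=2k+O(\sqrt k)>0$ for $k$ sufficiently large, so $\lambda$ is a genuine contact form and the level is stable.

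For part (2), the required stability is already established in the literature: when $\sigma$ is symplectic on a closed orientable surface different from $\T^2$, in particular on $S^2$, every sufficiently low positive energy level $H_{\op{kin}}^{-1}(k)$ is stable in $(T^*S^2,\omega_\sigma)$ by \cite{Ben14a} (see also \cite[Lemma 12.6]{FS07} and \cite[Remark 2.2]{Pat09}). I would simply invoke this result.

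The main obstacle is the scaling estimate in part (1): controlling $\beta(X_{H_{\op{kin}},\sigma})$ requires choosing the primitive $\beta$ in a way compatible with the radial dilations of the fibres, rather than merely invoking compactness on a single level. Once this is in place, both statements follow at once by pairing the stability of $H_{\op{kin}}^{-1}(k)$ with the existence, furnished by Theorem \ref{theoremb}, of a contractible closed magnetic geodesic at almost every energy in a neighbourhood of $k$.
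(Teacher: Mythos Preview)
Your proposal is correct and follows exactly the route the paper intends: combine Theorem~\ref{theoremb} with stability of the energy hypersurface, invoking \cite{Ben14a} for part~(2) and contact type at high energy for part~(1). The paper itself gives no details beyond this one-line remark (citing \cite{Gin96} and \cite{Gin87} for the two statements as already known), whereas you actually sketch the contact-type argument; to make your $O(\sqrt k)$ bound on $\beta(X_{H_{\op{kin}},\sigma})$ precise, take $\beta=r^*\alpha$ with $r:T^*S^2\setminus 0\to S^*S^2$ the radial retraction and $\alpha$ any primitive of $\pi^*\sigma|_{S^*S^2}$, so that the $dp$-components of $\beta$ scale like $1/|p|$ and absorb the $O(|p|^2)$ connection terms in $\dot p$.
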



\section{Contractible orbits for weakly exact magnetic forms}\label{sec:we}

In this section we give a sketch of proof of Theorem \ref{theorema}. Thus, let $\sigma$ be a weakly exact form. Under this assumption $\eta_k$ is exact on $\Lambda_0$ and a primitive is given by 
\begin{equation}
S_k(x,T)\, := \ S_k^L(x,T) \ + \ \int_{B^2} D_x^*\,\sigma\, ,
\label{secondadefdisn}
\end{equation}
where $D_x:B^2\rightarrow M$ is any capping disc for $x$. Notice that $S_k$ is well-defined since 
\[\int_{B^2} D_x^*\,\sigma\]
does not depend on the choice of the disc $D_x$, because $\sigma|_{\pi_2(M)}=0$. Moreover, the function $S_k$ defined in \eqref{secondadefdisn} extends the primitive of $\eta_k$ on short loops introduced in \eqref{primitiveonvdelta}. Finally, $S_k$ enjoys the following crucial property (see \cite[Lemma 2.1]{Mer10}):
\begin{equation*}
\inf_{\Lambda_0}\ S_k\ =\ -\infty\,, \ \ \ \ \forall \ k < c(H,\sigma)\, .
\end{equation*}

Our goal is to show that for almost every $k\in (e_0(H),c(H,\sigma))$ there exists a contractible periodic orbit of $\Phi^{H,\sigma}$ with energy $k$. We do this using a minimax method analogous to the one exploited in Section \ref{sec:gen}, but with a different minimax class as we now explain. Fix an open interval $I$ with compact closure in $(e_0(H),c(H,\sigma))$ and let $\delta=\delta(I)$, $\varepsilon=\varepsilon(I)$ be as in Lemma \ref{lem:low} and $T=T(I)$ as in Proposition \ref{lem:wdelta}. Pick a constant loop $(x_0,T)\in M_T$ 
and define 
\[\mathfrak U\ := \ \Big \{\, u:[0,1]\rightarrow \Lambda_0 \ \Big |\ u(0) = (x_0,T)\, , \ S_k(u(1))<0\,,\ \forall\, k\in I\,\Big \}\,.\]

Lemma \ref{lem:u} readily follows also in this setting. Indeed, since $M_{T}\subset \mathcal W'_{k}$ and $S_k$ does not increase along the flow lines of $\hat \Phi^{k}$ we conclude that the class $\mathfrak U$ is invariant under the semi-flow $\hat \Phi^{k}$. Furthermore, since $S_k$ is non-negative on $\mathcal V_{\delta}$ by Lemma \ref{lem:low}, every element $u\in \mathfrak U$ has to intersect $\partial \mathcal V_{\delta}$. 

Let $c:I\rightarrow (0,+\infty)$ denote the minimax function defined by 
\[c(k)\, := \ \inf_{u\in \mathfrak U} \max_{s\in [0,1]} \ S_k(u(s))\,.\]
As the family of functions $k\mapsto S_k$ is increasing, the function $c$ is non-decreasing. Thus, Lemma \ref{lem:mon} still holds. Finally, since all $u\in\mathfrak U$ intersect $\partial \mathcal V_{\delta}$, we know that 
\[\max_{s\in [0,1]}\  S_k(u(s)) \ \geq\ \varepsilon\, .\]

\noindent This readily implies that Lemma \ref{lem:notinwgeneral} remains true in this case and that $c(k)\geq \varepsilon$. Now, a minor adaptation of the proof of Proposition \ref{Struwe} shows that for every $k\in  I$ at which the function $c$ is Lipschitz-continuous there is a vanishing sequence for $\eta_k$ with periods bounded and bounded away from zero. Theorem \ref{theorema} then follows using Theorem \ref{theorem:ps} and the fact that $c$ is almost everywhere Lipschitz-continuous.

As a sample application of stability, one can reprove a weak version of a theorem in \cite{Gin87}, without the multiplicity results. 

\begin{corollary}
Suppose $(M,g)$ is a Riemannian surface with positive genus and $\sigma\in \Omega^2(M)$ is a symplectic 2-form. If $k>0$ is small enough, there exists a contractible closed magnetic geodesic with energy $k$.
\label{corollariosuperfici}
\end{corollary}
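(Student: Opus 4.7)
The plan is to combine Theorem \ref{theorema} with the fact that, under these hypotheses, every sufficiently low kinetic energy level is stable in $(T^*M,\omega_\sigma)$, and then appeal to the abstract result mentioned in the introduction which upgrades the almost-every existence given by Theorem \ref{theorema} to every single stable level.

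First I would check that Theorem \ref{theorema} applies to the pair $(H_{\op{kin}},\sigma)$. Since $M$ is a closed surface of positive genus, its universal cover is either $\R^2$ or the hyperbolic plane, hence contractible; in particular $\pi_2(M)=0$ and any closed $2$-form on $\widetilde M$ is exact. Thus $\sigma$ is weakly exact. Being symplectic, $\sigma$ is non-zero, so by the remark following \eqref{cHsigma} we have $e_0(H_{\op{kin}})=0<c(H_{\op{kin}},\sigma)$. Theorem \ref{theorema} then furnishes a contractible closed magnetic geodesic at almost every energy in the non-empty interval $(0,c(H_{\op{kin}},\sigma))$.

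Second, I would observe that for every sufficiently small $k>0$ the level $H_{\op{kin}}^{-1}(k)$ is stable in $(T^*M,\omega_\sigma)$. For $M$ of genus at least two this is the content of \cite{Ben14a}, while for $M=\T^2$ a global angular form associated to any nowhere vanishing section of $T^*\T^2$ provides an explicit stabilizing form on each low energy level, as recalled in the introduction.

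Finally I would conclude by invoking the abstract principle quoted in the introduction: if a level $H^{-1}(k)$ is stable and satisfies the hypotheses of Theorem \ref{theorema}, then $H^{-1}(k)$ carries a contractible periodic orbit (see \cite[Ch.~4, Thm.~5]{HZ94} and \cite[Corollary~8.4]{Abb13}). Applied to $H_{\op{kin}}^{-1}(k)$ for small $k>0$ this gives the desired conclusion. The main obstacle is packaged inside this abstract step: concretely one picks a sequence $k_h\to k$ in the full-measure set produced by Theorem \ref{theorema} together with contractible orbits $\gamma_h\subset H_{\op{kin}}^{-1}(k_h)$, and stability of $H_{\op{kin}}^{-1}(k)$ forces the periods of the $\gamma_h$ to remain uniformly bounded from above and away from zero, so that an Arzelà--Ascoli argument combined with the smooth dependence of $\Phi^{H_{\op{kin}},\sigma}$ on initial data yields a limit orbit at energy exactly $k$.
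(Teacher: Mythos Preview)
Your proposal is correct and follows exactly the route the paper intends: the corollary is presented there without a written proof, merely as ``a sample application of stability'' following Theorem~\ref{theorema}, and the introduction spells out precisely the two ingredients you assemble---stability of low kinetic energy levels on orientable surfaces (\cite{Ben14a} for genus $\geq 2$, the global angular form for $\T^2$) together with the abstract passage from almost-every to every energy on stable levels via \cite[Ch.~4, Thm.~5]{HZ94} and \cite[Corollary~8.4]{Abb13}. Your additional unpacking of the limiting argument in the last paragraph is accurate and matches how those references work.
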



\section{Contractible periodic orbits on displaceable energy levels}\label{sec:dis}

In this section we prove Theorem \ref{theoremc} and Corollary \ref{corollaryc}. Thus, let $M$ be a closed manifold and $\sigma$ any closed $2$-form on it. Consider a smooth function $f:M\rightarrow\R$. The
key observation is that the Hamiltonian flow of $f\circ\pi:T^*M\rightarrow\R$ is a fibrewise translation by $-df$ (in particular it does not depend on $\sigma$).

\begin{lemma}
For every $t\in\R$ and $(q,p)\in T^*M$, there holds
\[\Phi^{f\circ\pi,\sigma}_t(q,p)\ =\ (q,p-t\, d_qf)\, .\]
\end{lemma}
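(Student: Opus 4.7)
The plan is to compute the Hamiltonian vector field $X := X_{f\circ\pi,\sigma}$ directly from its defining equation $\iota_X \omega_\sigma = -d(f\circ\pi)$ and then integrate the resulting ODE, which will turn out to be trivial.

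First I would work in canonical coordinates $(q^i, p_i)$ on $T^*M$ and write a generic vector field as $X = A^i \partial_{q^i} + B_i \partial_{p_i}$. The contraction with $dp \wedge dq = \sum_i dp_i \wedge dq^i$ produces $B_i\, dq^i - A^i\, dp_i$. For the twist term, the crucial observation is that $\pi^*\sigma$ is a pullback from $M$, so it annihilates all vertical vectors; hence $\iota_X \pi^*\sigma = \pi^*(\iota_{d\pi(X)}\sigma)$, which is a purely horizontal $1$-form of the form $A^j \sigma_{ji}\, dq^i$ (in local coordinates with $\sigma = \tfrac12 \sigma_{ij} dq^i \wedge dq^j$). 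On the other hand, since $f\circ\pi$ depends only on the base, $d(f\circ\pi) = \pi^*df$ contains no $dp_i$ components.

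Matching coefficients of $dp_i$ in the equation $\iota_X \omega_\sigma = -d(f\circ\pi)$ immediately forces $A^i = 0$ for every $i$. This is the key step: because $d(f\circ\pi)$ has no vertical part and only $dp \wedge dq$ produces $dp_i$ contributions, the horizontal component of $X$ must vanish, and therefore the magnetic term $\iota_X \pi^*\sigma$ drops out entirely. Matching then the $dq^i$ coefficients gives $B_i = -\partial_{q^i} f$, so that
\[
X_{f\circ\pi,\sigma}\ =\ -\,\partial_{q^i}\!f\;\partial_{p_i},
\]
which is a well-defined global vertical vector field equal, under the fibrewise identification $T_{(q,p)}(T^*_qM) \cong T^*_qM$, to $-d_qf$.

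The associated ODE is $\dot q = 0$, $\dot p_i = -\partial_{q^i}f(q)$, and since the right-hand side depends only on $q$, which is conserved, the equation integrates explicitly to $q(t) = q(0)$ and $p(t) = p(0) - t\, d_{q(0)}f$. This is exactly the claimed formula. I do not expect any real obstacle here; the only subtlety is the sign conventions in $\omega_\sigma$ and in $\iota_X \omega_\sigma = -dH$, which one must keep consistent throughout, but once the horizontal component of $X$ is shown to vanish the contribution of $\sigma$ disappears and the proof reduces to a one-line integration.
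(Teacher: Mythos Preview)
Your proof is correct and follows essentially the same approach as the paper: the paper simply records the general twisted Hamilton equations $\dot q=\partial K/\partial p$, $\dot p=-\partial K/\partial q-\sigma(\partial K/\partial p,\cdot)$ and observes that for $K=f\circ\pi$ the first equation gives $\dot q=0$, which kills the magnetic term and leaves $\dot p=-df$. Your derivation is the same computation spelled out from the defining relation $\iota_X\omega_\sigma=-dH$, with the identical key step that the vanishing of the horizontal component makes $\iota_X\pi^*\sigma$ drop out.
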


\begin{proof}
Let us write down the coordinate expression of the Hamilton equations for an arbitrary function $K:T^*M\rightarrow\R$:
\begin{equation}\label{eq:ham}
\left\{
\begin{aligned}
\dot q\ &=\ \frac{\partial K}{\partial p}\,,\\
\dot p\ &=\ -\frac{\partial K}{\partial q}\ -\ \sigma\left(\frac{\partial
K}{\partial p},\cdot\right).
\end{aligned}
\right.
\end{equation}
When $K=f\circ\pi$ they reduce to $\dot q=0$, $\dot{p}=-df$ and the lemma follows.
\end{proof}

\begin{proof}[Proof of Theorem \ref{theoremc}]
We have to show that if $k<e_0(H)$, then $\{H\leq k\}$ is displaceable in $(T^*M,\omega_\sigma)$. By the very definition of $e_0(H)$, the set
$$M\setminus \pi(\{H\leq k\})$$
is open and non-empty for every $k<e_0(H)$. Hence, there is a function $f:M\rightarrow \R$ whose critical points are all disjoint from  $\pi(\{H\leq k\})$ by \cite[Lemma 8.1]{Con06}. In particular, if $g$ is any Riemannian metric on $M$, 
then there exists $\varepsilon>0$ such that
\begin{equation*}
\inf_{q\in\pi(\{H\leq k\})}\ |d_qf|_q\ \geq\ \varepsilon\,.
\end{equation*}

On the other hand, since the sublevel sets of $H$ are compact, there also exists $B>0$ such that 
\begin{equation*}
\{H\leq k\}\ \subset\ \Big \{(q,p)\in T^*M \ \Big |\ |p|_q\leq B\Big \}\, .
\end{equation*}
If $T>B/\varepsilon$ and $(q,p)\in \{H\leq k\}$, the superlinearity of $H$ implies that
\begin{align}
H(q,p-T\, d_qf) \ &\geq\ H_0\, |p\,-\,T\, d_qf|_q\ -\ H_1 \nonumber \\ 
                                &\geq\ H_0\big(T\, |d_qf|_q\,-\,|p|_q\big)\ -\ H_1\nonumber \\ 
                                &\geq\ H_0\varepsilon T\ -\ H_0B\ -\ H_1 \label{ggg}
\end{align}
for some positive constants $H_0,H_1$. When $T$ is large enough, the quantity in \eqref{ggg} is bigger than $k$ and therefore
\begin{equation*}
\Phi^{f\circ\pi,\sigma}_{T}\ =\ \Phi^{Tf\circ\pi,\sigma}_1
\end{equation*}
is a Hamiltonian diffeomorphism which displaces the set $\{H\leq k\}$. Consider now a compactly supported function $\chi:T^*M\rightarrow\R$ which is equal to $1$ on a neighbourhood of
\begin{equation}
\Big \{\Phi^{f\circ\pi,\sigma}_t(q,p)\ \Big |\ (q,p)\in \{H\leq k\}\,,\ t\in[0,T]\Big \}\,.
\end{equation}

\noindent Then, $\chi\cdot (Tf\circ\pi)$ is a compactly supported Hamiltonian whose time-$1$ flow map displaces $\{H\leq k\}$ and this completes the proof.
\end{proof}

\begin{remark}
Other classes of Hamiltonians can be used to displace the sublevels of $H$. In the case $\sigma=0$, Contreras took a Hamiltonian of the kind $F(q,p)=p(\nabla f_q)$, where $\nabla f$ is the gradient vector field of a smooth function
$f:M\rightarrow\R$, whose critical points are disjoint from $\pi(\{H\leq k\})$ (see \cite[Proposition 8.2]{Con06}). In his argument displaceability follows since the Hamiltonian flow of $F$ lifts the gradient
flow of $f$ and $\pi(\{H\leq k\})$ is displaced in $M$ by such flow. This choice is equally good in the general case we consider here, as the Hamilton equations \eqref{eq:ham} for the function $F$ still imply $\dot q=\nabla f$.
\end{remark}

\begin{proof}[Proof of Corollary \ref{corollaryc}]
First, we observe that $(T^*M,\omega_\sigma)$ is tame and strongly semi-positive. Tameness was proved in \cite{CGK04} (there the term ``geometrically bounded" is used). Strong semi-positivity is due to the fact
that the first Chern class of $\omega_\sigma$ vanishes since the cotangent fibres form a Lagrangian foliation of $(T^*M,\omega_\sigma)$. Thus, the hypotheses of Theorem 1.2 of \cite{Sch06} are satisfied and we can apply
Corollary 3.2 of \cite{Sch06} to $H^{-1}(k)$, for $k\in[\min H,e_0(H))$, by Theorem \ref{theoremc}. This implies the existence of contractible periodic orbits on almost every energy level in the energy range $[\min H, e_0(H))$, 
thus completing the proof.
\end{proof}


\bibliographystyle{amsalpha}
\bibliography{wes}

\end{document}